\title{Isometries and Collineations\\ of the Cayley Surface}
\author{Johannes Gmainer \and Hans Havlicek}
\date{}
\theoremstyle{change} {\theorembodyfont{\rm}
  \newtheorem{defi}{Definition}[section]

}
  \newtheorem{lemma}[defi]{Lemma}
  \newtheorem{proposition}[defi]{Proposition}
  \newtheorem{theorem}[defi]{Theorem}
\newcommand{\Matrixfeld}[4]{\left#1\!\begin{array}{*{#3}{c}}#4\end{array}\!\right#2}
\newcommand{\Mat}{\Matrixfeld()}
\newenvironment{proof}
    {\begin{trivlist} \item {\emph{Proof}.}} 
    {{}\hfill $\square$ \end{trivlist}} 
\newcommand{\Aut}{{\mathrm{Aut}}}
\DeclareMathOperator{\Char}{{\mathrm{Char}}}
\newcommand{\DV}{{\mathrm{CR}}}
\newcommand{\GL}{{\mathrm{GL}}}
\newcommand{\id}{{\mathrm{id}}}
\newcommand{\Stab}{{\mathrm{Stab}}}
\newcommand{\T}{{\mathrm{T}}}
\newcommand{\kreis}{{\mathcal{C}}}
\newcommand{\cE}{{\mathcal E}}
\newcommand{\cR}{{\mathcal R}}
\newcommand{\cV}{{\mathcal V}}
\newcommand{\PP}{{\mathbb P}}
\newcommand{\RR}{{\mathbb R}}
\newcommand{\bbp}{{\bm p}}
\newcommand{\bX}{{\bm X}}
\newcommand{\eps}{{\varepsilon}}
\newcommand{\notpar}{\mathbin{\not{}\hspace{-0.35ex}{\parallel}\hspace{0.35ex}}}
\begin{document}
\sloppy

\maketitle

\begin{abstract}
Let $F$ be Cayley's ruled cubic surface in a projective three-space over any
commutative field $K$. We determine all collineations fixing $F$, as a set, and
all cubic forms defining $F$. For both problems the cases $|K|=2,3$ turn out to
be exceptional. On the other hand, if $|K|\geq 4$ then the set of simple points
of $F$ can be endowed with a non-symmetric distance function. We describe the
corresponding circles, and we establish that each isometry extends to a unique
projective collineation of the ambient space.
\par
Keywords: Cayley surface, non-symmetric distance, isometry
\par
MSC 2000: 51N25, 51N35, 51B15
\end{abstract}

\begin{section}{Introduction}

We investigate Cayley's ruled cubic surface $F$ in a three-dimensional
projective space over an arbitrary commutative ground field $K$. It is fairly
obvious that ``most'' of the results that are known from the classical case
($K=\mathbb{R},\mathbb{C}$) will remain valid. However, a closer look shows
that the situation is sometimes rather intricate.

\par

In Section \ref{se:koll} we determine all collineations of the Cayley surface.
If $|K|\leq 3$ then there are ``more'' such collineations than in the general
case. From the proof of this result it is immediate that for $|K|\leq 3$ there
are non-proportional cubic forms defining $F$. However, that proof does not
answer the question of finding all such cubic forms to within a non-zero
factor. We pay attention to this question, since it governs the interplay
between incidence geometry and algebraic geometry. In Section \ref{se:alle} we
show that the number of solutions to this problem equals $64$ if $|K|=2$, two
if $|K|=3$, and one otherwise. Our first attempt was to solve this problem by
``brute force'' with the help of a computer algebra system. However, due to the
presence of polynomial identities of high degree, we could not succeed without
assuming $|K|$ being rather large. Therefore, in our current approach, we first
use a lot of geometric reasoning before we enter the realm of algebra. In this
way we obtain the result for $|K|\geq 3$. By virtue of a theorem due to
\textsc{G.~Tallini} \cite{tallini-61a}, it is easy to treat the remaining case
$|K|=2$.

\par

The Cayley surface has an interesting ``inner geometry'' which can be based
upon a distance function appearing (in the real case) in an article of
\textsc{H.~Brauner} \cite{brau-64}. In Section~\ref{se:distance-function},
using a completely different, purely geometrical approach, we generalize this
distance function to the case of an arbitrary ground field $K$ with more than
three elements. Our distance function $\delta$ fits into the very general
concepts developed by \textsc{W.~Benz} \cite{benz-92}. It is non-symmetric;
this means that the distance from $A$ to $B$ is in general not the distance
from $B$ to $A$. It will be established that $\delta$ is a defining function
for the group of automorphic projective collineations of the Cayley surface.

\par

Occasionally, we shall also come across phenomena reflecting the characteristic
of the ground field, like the presence of a line of nuclei in case $\Char K=3$
(cf.\ formula~(\ref{partial-null-zwei-eq})), or the absence of circles with
more than one mid-point in case of $\Char K=2$ (cf.\
Proposition~\ref{prop:kreise}).
\end{section}

\begin{section}{Preliminaries}\label{se:prelim}

Throughout this article we consider the three-dimensional projective space
$\PP_3(K)$ over a commutative field $K$. The points of $\PP_3(K)$ are the
one-dimensional subspaces of the column space $K^{4 \times 1}$, viz.\ they are
of the form $K \bbp$ with $(0,0,0,0)^{\T} \ne \bbp = (p_0,p_1,p_2,p_3)^{\T} \in
K^{4 \times 1}$.

\par

Let $K[X_0,X_1,X_2,X_3]$ be the polynomial ring which arises from $K$ by
adjoining independent indeterminates $X_0, X_1, X_2, X_3$. We shall use the
shorthand $\bX:=(X_0,X_1,X_2,X_3)$. Each polynomial $g(\bX) \in K[\bX]$
determines a \emph{polynomial function\/}
\begin{equation}\label{eq:polyfkt}
K^{4 \times 1} \rightarrow K: (p_0, p_1, p_2, p_3)^{\T} \mapsto g(p_0, p_1,
p_2, p_3).
\end{equation}
Since $K$ may be a finite field, it is necessary to distinguish between a
polynomial and the associated polynomial function. We shall mainly be concerned
with homogeneous polynomials (forms) in $K[\bX]$. By virtue of
(\ref{eq:polyfkt}), the subspace of homogeneous polynomials of degree one in
$K[\bX]$ is in bijective correspondence with the space of linear mappings $K^{4
\times 1} \rightarrow K$ (the dual space of $K^{4 \times 1}$), which in turn
can be viewed as the row space $K^{1 \times 4}$. This bijection allows to
identify $K[\bX]$ with the \emph{symmetric algebra\/} on the row space $K^{1
\times 4}$; cf., for example, \cite[pp.~155--156]{deheu-93}.

\par

We refer to \cite[pp.~48--51]{hirs-98} for those basic notions of algebraic
geometry which will be used in this paper. However, our notation differs from
\cite{hirs-98}, as we write, for example,
\begin{equation*}\label{}
 \cV\big(g_1(\bX),\ldots,g_r(\bX)\big)
 :=\big\{K\bbp\in\PP_3(K)\mid g_1(\bbp)=\cdots=g_r(\bbp)=0\big\}
\end{equation*}
for the set of $K$-rational points of the variety given by homogeneous
polynomials $g_1(\bX),g_2(\bX),\ldots,g_r(\bX) \in K[\bX]$.

\par

The plane $\omega:=\cV(X_0)$ will be considered as \emph{plane at infinity},
thus turning $\PP_3(K)$ into a projectively closed affine space. Finally, let
$Q_i:= K (\delta_{0i},\delta_{1i},\delta_{2i},\delta_{3i})^{\T}$, where
$\delta_{ji}$ is the Kronecker delta and $i\in\{0,1,2,3\}$, be the base points
of the standard frame of reference.

\par

Let us turn to \emph{Cayley's ruled cubic surface\/} or, for short, the
\emph{Cayley surface}. It is, to within projective collineations, the point set
$F:=\cV\big(f(\bX)\big)$, where
\begin{equation}\label{eq:aff-punkte}
 f(\bX):=X_0X_1X_2-X_1^3-X_0^2X_3 \in
 K[\bX].
\end{equation}
We collect some of its properties (see \cite{bertini-23}, \cite{brau-64}, and
\cite{sauer-37} for the classical case): The parametrization
\begin{equation}\label{eq:P(u,v)}
 K^2\to \PP_3(K): (u_1,u_2) \mapsto K(1,u_1,u_2,u_1u_2-u_1^3)^{\T}=:P(u_1,u_2)
\end{equation}
is injective, and its image coincides with $F\setminus\omega$ (the affine part
of $F$). The intersection
\begin{equation}\label{g-unendlich-eq}
F\cap\omega=\cV(X_0,X_1)=:g_{\infty}
\end{equation}
is a line. By the above, the Cayley surface has $|K|^2+|K|+1$ points; cf.\
\cite[Teorema~6]{rosati-56}. Hence, in case of a finite ground field, it does
not fit into the characterizations given by \textsc{G.~Tallini}
\cite{tallini-59}. The plane $\cV(X_3)$ meets $F$ along the line $\cV(X_1,X_3)$
and the parabola
\begin{equation}\label{kegelschnitt-eq}
    l:=\cV(X_0X_2-X_1^2,X_3).
\end{equation}
The mapping
\begin{equation}\label{erzeugung-eq}
    \beta: l \rightarrow g_{\infty}: K(s_0^2,s_0s_1,s_1^2,0)^{\T}
    \mapsto K(0,0,s_0,s_1)^{\T},
\end{equation}
where $(0,0)\neq(s_0,s_1)\in K^2$, is projective, and each point of $l$ is
distinct from its image point. Let $g(s_0,s_1)$ denote the line joining the two
points given in (\ref{erzeugung-eq}). Thus, in particular, we obtain
$g(0,1)=g_{\infty}$.

\par

It is immediate that every line $g(s_0,s_1)$ is a \emph{generator\/} of $F$,
i.e., it is contained in $F$. Conversely, let $h \subset F$ be a line. If $h
\subset \omega$ then $h=g_\infty$, by (\ref{g-unendlich-eq}). Otherwise, $h$
has a unique point at infinity which necessarily belongs to $g_{\infty}$. The
plane which is spanned by $h$ and $g_{\infty}$ has the form
$\pi=\cV(a_0X_0-X_1)$ with $a_0 \in K$. The intersection $F \cap \pi$ is given
by
\begin{equation}\label{eq:2geradenschnitt}
\cV\left(a_0X_0-X_1, X_0^2(a_0X_2-a_0^3X_0-X_3)\right),
\end{equation}
whence it consists of two distinct lines. This shows $F \cap \pi = g_{\infty}
\cup h$ and $h=g(1,a_0)$.

\par

According to (\ref{erzeugung-eq}), the line $g_{\infty}$ is not only a
generator of $F$, but also a \emph{directrix}, as it has non-empty intersection
with every generator. Each point of $g_{\infty}$, except the point $Q_3$, is on
precisely two generators of $F$; each affine point of $F$ is incident with
precisely one generator. Thus the projectivity (\ref{erzeugung-eq}) can be used
to ``generate'' the Cayley surface in a purely geometric way. This is nicely
illustrated in \cite[p.~89]{sauer-37} for the real projective three-space.
\end{section}

\begin{section}{Automorphic collineations of $F$}\label{se:koll}

Each matrix $M=(m_{ij})_{0\leq i,j\leq 3}\in\GL_4(K)$ acts on the row space
$K^{1 \times 4}$ by multiplication from the right hand side. By identifying
each row vector $(d_0, d_1, d_2, d_3)\in K^{1 \times 4}$ with $d_0X_0+
d_1X_1+d_2X_2+d_3X_3\in K[\bX]$, the matrix $M$ yields a linear bijection of
the subspace of homogeneous polynomials of degree one; in particular,
\begin{equation}\label{eq:poly_aktion}
    X_i \mapsto \sum_{j=0}^{3} m_{ij}X_j \mbox{ for }i\in\{0,1,2,3\}.
\end{equation}
By the universal property of symmetric algebras, this linear bijection extends
to a $K$-algebra automorphism of $K[\bX]$; cf., e.g., \cite[p.~156]{deheu-93}.
Thus, altogether, $\GL_4(K)$ acts on $K[\bX]$.

\par

On the other hand, $M$ acts on the column space $K^{4 \times 1}$ by left
multiplication, and therefore as a projective collineation on $\PP_3(K)$. Given
a form $g(\bX)\in K[\bX]$ and its image under $M$, say $h(\bX)$, this
collineation takes $\cV\big(h(\bX)\big)$ to $\cV\big(g(\bX)\big)$, since
$g(M\cdot\bbp)=h(\bbp)$ for all $\bbp\in K^{4\times 1}$. If, moreover,
$h(\bX)\sim g(\bX)$, i.e., the polynomials are proportional by a non-zero
scalar in $K$, then $\cV\big(g(\bX)\big)= \cV\big(h(\bX)\big)$.

\par

The following result holds:

\begin{lemma} The set of all matrices
\begin{equation}\label{zwei-koll-typen-eq}
  M_{a,b,c}:= \Mat4{
  1&0&0&0\\
  a&c&0&0\\
  b&3\,ac&{c}^{2}&0\\
  ab-{a}^{3}&bc&a{c}^{2}&{c}^{3}
  }
\end{equation}
where $a,b \in K$ and $c \in K^{\times}:=K\setminus\{0\}$ is a group, say
$G(F)$, under multiplication. Each matrix in $G(F)$ leaves invariant the cubic
form $f(\bX)= X_0X_1X_2-X_1^3-X^{2}_0X_3$ to within the factor $c^3$.
Consequently, the group $G(F)$ acts on $F$ as a group of projective
collineations.
\end{lemma}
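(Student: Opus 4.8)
The plan is to treat the three assertions in order: verify that the matrices $M_{a,b,c}$ close up to a group by a direct computation, check the form-invariance by substituting the induced coordinate action into $f$, and then read off the geometric conclusion from the paragraph preceding the lemma.

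First I would dispose of the group property. Every $M_{a,b,c}$ is lower triangular with diagonal $(1,c,c^2,c^3)$, hence invertible exactly because $c\in K^{\times}$, and the identity matrix is $M_{0,0,1}$. For closure I would multiply two such matrices; a short calculation gives
\begin{equation*}
  M_{a_2,b_2,c_2}\,M_{a_1,b_1,c_1}=M_{a_3,b_3,c_3},\quad a_3=a_2+c_2a_1,\quad b_3=b_2+3a_2c_2a_1+c_2^2b_1,\quad c_3=c_2c_1,
\end{equation*}
which is again of the prescribed shape and, since $c_3\neq 0$, simultaneously exhibits the inverse inside the set. A more conceptual cross-check, which I would keep at hand, is that $M_{a,b,c}$ sends the affine point $P(u_1,u_2)$ of (\ref{eq:P(u,v)}) to $P(a+cu_1,\ b+3acu_1+c^2u_2)$; thus each matrix induces the affine map $(u_1,u_2)\mapsto(a+cu_1,\,b+3acu_1+c^2u_2)$ of the parameter plane. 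These maps are visibly closed under composition, and since the affine part of $F$ spans $\PP_3(K)$ the correspondence is injective, so the group law for the matrices can alternatively be deduced from that for the affine maps.

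For the invariance I would apply the action (\ref{eq:poly_aktion}) of $M_{a,b,c}$, namely $X_0\mapsto X_0$, $X_1\mapsto aX_0+cX_1$, $X_2\mapsto bX_0+3acX_1+c^2X_2$, and $X_3\mapsto(ab-a^3)X_0+bcX_1+ac^2X_2+c^3X_3$. Substituting these into $f(\bX)=X_0X_1X_2-X_1^3-X_0^2X_3$ and expanding, I expect the coefficients of $X_0^3$, $X_0^2X_1$, $X_0^2X_2$ and $X_0X_1^2$ to cancel completely, leaving precisely $c^3\bigl(X_0X_1X_2-X_1^3-X_0^2X_3\bigr)=c^3f(\bX)$. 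This is where the seemingly ad hoc entries $3ac$ and $ab-a^3$ earn their keep: they are tuned exactly to kill these spurious monomials. With $f$ mapped to $c^3f\sim f$, the paragraph before the lemma yields $\cV(c^3f)=\cV(f)=F$, so the collineation induced by $M_{a,b,c}$ carries $F$ onto itself, and $G(F)$ acts on $F$ as claimed.

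The only genuine obstacle is bookkeeping -- getting every coefficient right in both the matrix product and the substitution -- and I expect the substitution to be the more error-prone half. The point I would flag in advance is that none of the required cancellations involves dividing by $2$ or $3$; they are pure polynomial identities. Hence the computation, and with it the uniform factor $c^3$, is valid over every commutative field, in particular when $\Char K\in\{2,3\}$.
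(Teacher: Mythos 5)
Your proof is correct and takes essentially the same route as the paper: the identical composition law $a''=a+cx$, $b''=b+3acx+c^2y$, $c''=cz$ (from which the inverse $M_{-ac^{-1},\,(3a^2-b)c^{-2},\,c^{-1}}$ is obtained by solving, possible since $c\neq 0$), followed by the same direct substitution via (\ref{eq:poly_aktion}) showing the image of $f(\bX)$ is $c^3f(\bX)\sim f(\bX)$, hence $\cV\big(f(\bX)\big)=F$ is preserved --- and your observation that no division by $2$ or $3$ occurs correctly explains the validity in all characteristics. One minor caveat on your optional cross-check: the formula $P(u_1,u_2)\mapsto P(a+cu_1,\,b+3acu_1+c^2u_2)$ is right (it is exactly what the paper uses later to prove Lemma \ref{transitiviteats-thm}), but ``the affine part spans $\PP_3(K)$'' does not by itself yield injectivity of the matrix-to-map correspondence (for $|K|=2$ the affine part has only four points, too few to determine a projective collineation; injectivity there follows instead from recovering $(a,b,c)$ from the images of $P(0,0)$ and $P(1,0)$) --- though since this aside is not part of your main argument, the proof stands as written.
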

\begin{proof}
We obtain, for all $a,b,c,x,y,z\in K$ with $c,z\neq 0$,
$M_{a,b,c}^{-1}=M_{a',b',c'}$, where $a':=-ac^{-1}$, $b':=(3a^2-b)c^{-2}$,
$c':=c^{-1}$, and $M_{a,b,c}\cdot
 M_{x,y,z}=M_{a'',b'',c''}$, where $a'':=a+cx$, $b'':=b+3acx+c^2y$, $c'':=cz$.
The rest is a straightforward calculation: By (\ref{eq:poly_aktion}), the image
of $f(\bX)$ under the action of $M_{a,b,c}$ equals $c^3f(\bX)$.
\end{proof}
\begin{lemma} \label{lem:semilin}
For each automorphism $\zeta \in \Aut(K)$ the collineation $\PP_3(K)\to
\PP_3(K) : K (p_0,p_1,p_2,p_3)^\T
 \mapsto K\big(\zeta(p_0),\zeta(p_1),\zeta(p_2),\zeta(p_3)\big)^\T$
leaves invariant the Cayley surface $F$.
\end{lemma}
\begin{proof}
Observe that all coefficients of the polynomial $f(\bX)$ are in the prime field
of $K$, whence they are fixed under $\zeta$. Therefore
$f(\bbp)=f\big(\zeta(\bbp)\big)$ for all $\bbp\in K^{4\times 1}$.
\end{proof}
We now turn to the problem of finding all automorphic collineations of $F$. The
following lemma is preliminary, a stronger result will be established in
Theorem \ref{thm:trans}.

\begin{lemma} \label{transitiviteats-thm}
The group $G(F)$ acts transitively on $F \setminus g_{\infty}$.
\end{lemma}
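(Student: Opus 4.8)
The plan is to reduce transitivity to a single orbit computation. Since $G(F)$ is a group, it suffices to exhibit one point of $F\setminus g_\infty$ whose $G(F)$-orbit is all of $F\setminus g_\infty$. I would choose the base point $Q_0=K(1,0,0,0)^\T=P(0,0)$, which lies in the affine part $F\setminus g_\infty$ by the parametrization (\ref{eq:P(u,v)}).

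First I would compute the action of a generic element $M_{a,b,c}$ on $Q_0$. Because the column vector $(1,0,0,0)^\T$ simply selects the first column of $M_{a,b,c}$, one reads off directly from (\ref{zwei-koll-typen-eq}) that $M_{a,b,c}\cdot(1,0,0,0)^\T=(1,a,b,ab-a^3)^\T$. Comparing with (\ref{eq:P(u,v)}), this image is precisely the affine point $P(a,b)$.

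The decisive observation is then that the fourth coordinate of the image, namely $ab-a^3$, coincides identically with the fourth coordinate $u_1u_2-u_1^3$ of $P(u_1,u_2)$ under the substitution $u_1=a$, $u_2=b$; this is exactly the algebraic feature built into the first column of the matrix in (\ref{zwei-koll-typen-eq}). Hence, letting $a$ and $b$ range over all of $K$ (and fixing, say, $c=1$), the $G(F)$-orbit of $Q_0$ already contains every point $P(a,b)$. Since the parametrization $P$ is a bijection onto $F\setminus\omega=F\setminus g_\infty$, this orbit is all of $F\setminus g_\infty$, which establishes transitivity.

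I expect essentially no obstacle: the transitivity is forced by the shape of the first column of $M_{a,b,c}$, and the only point requiring verification---that this first column indeed lands on the surface in the correct parametrized form---is immediate from the identical match of the cubic expression $ab-a^3$ with the parametrization. Note also that the scalar $c$ plays no role in moving $Q_0$, which already foreshadows that the stabilizer of $Q_0$ in $G(F)$ is non-trivial and that a sharper statement (as announced in Theorem~\ref{thm:trans}) should be within reach.
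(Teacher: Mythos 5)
Your proof is correct and follows essentially the same route as the paper: both arguments reduce transitivity to showing that the orbit of $Q_0=P(0,0)$ is all of $F\setminus g_\infty$, and both verify this by reading off that $M_{u_1,u_2,1}$ (your $M_{a,b,1}$) sends $(1,0,0,0)^\T$ to $(1,u_1,u_2,u_1u_2-u_1^3)^\T=P(u_1,u_2)$, using the injective parametrization (\ref{eq:P(u,v)}) and $F\cap\omega=g_\infty$. Your closing remark that $c$ plays no role, foreshadowing the stabilizer $\{M_{0,0,c}\mid c\in K^\times\}$, matches the paper's subsequent observation that $\{M_{a,b,1}\mid a,b\in K\}$ acts regularly on $F\setminus g_\infty$.
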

\begin{proof}
We fix the base point $Q_0\in F\setminus g_\infty$. By (\ref{eq:aff-punkte}),
an arbitrarily chosen affine point of $F$ has the form $P(u_1,u_2)$ with
$(u_1,u_2)\in K^2$. Hence the matrix $M_{u_1,u_2,1}$ takes $Q_0=P(0,0)$ to
$P(u_1,u_2)$, and the assertion follows.
\end{proof}
We remark that $\{M_{a,b,1}\mid a,b\in K\}$ is a commutative subgroup of
$G(F)$. By the previous proof, this group acts regularly on $F\setminus
g_\infty$. Summing up our three lemmas, we obtain
\begin{proposition}
Each collineation $\kappa$ of\/ $\PP_3(K)$ which fixes the Cayley surface $F$
can be written as $\kappa=\kappa_3\circ\kappa_2\circ\kappa_1$, where $\kappa_1$
is given as in Lemma \emph{\ref{lem:semilin}}, $\kappa_2$ is a projective
collineation which stabilizes $F$ and the base point $Q_0=K(1,0,0,0)$, and
$\kappa_3$ is induced by a matrix in $G(F)$.
\end{proposition}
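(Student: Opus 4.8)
The plan is to exploit the fundamental theorem of projective geometry to split off the non-projective part of $\kappa$ first, and then to use the transitivity result of Lemma \ref{transitiviteats-thm} to normalize the image of $Q_0$. First I would recall that every collineation of $\PP_3(K)$ is induced by a semilinear bijection of $K^{4\times 1}$, say $\bbp\mapsto M\,\zeta(\bbp)$ with $M\in\GL_4(K)$ and $\zeta\in\Aut(K)$. Taking $\kappa_1$ to be the collineation associated with $\zeta$ as in Lemma \ref{lem:semilin}, the composite $\lambda:=\kappa\circ\kappa_1^{-1}$ is then a projective collineation, and one checks $\kappa=\lambda\circ\kappa_1$. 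Since $\kappa$ fixes $F$ by hypothesis and $\kappa_1$ (hence $\kappa_1^{-1}$) fixes $F$ by Lemma \ref{lem:semilin}, the collineation $\lambda$ fixes $F$ as well. Thus it remains to write $\lambda=\kappa_3\circ\kappa_2$ with $\kappa_3$ induced by a matrix of $G(F)$ and $\kappa_2$ a projective collineation stabilizing both $F$ and $Q_0$.

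Next I would show that $\lambda$ maps the affine part $F\setminus g_{\infty}$ onto itself; equivalently, that $\lambda(g_{\infty})=g_{\infty}$. Being a collineation fixing $F$, $\lambda$ permutes the generators of $F$ (the lines contained in $F$) and preserves incidence, so it carries a directrix generator to a directrix generator. The preliminaries show that $g_{\infty}$ is a directrix; the point I would verify is that it is the \emph{only} generator with this property. Indeed, two distinct affine generators $g(1,a)$ and $g(1,b)$ have the distinct infinite points $K(0,0,1,a)^{\T}$ and $K(0,0,1,b)^{\T}$ on $g_{\infty}$, and they cannot share an affine point because each affine point of $F$ lies on a single generator; hence the affine generators are mutually skew and none of them is a directrix. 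Therefore $g_{\infty}$ is fixed by $\lambda$, and in particular $\lambda(Q_0)\in F\setminus g_{\infty}$.

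Finally I would invoke Lemma \ref{transitiviteats-thm}: since $\lambda(Q_0)$ is an affine point of $F$, there is a collineation $\kappa_3$ induced by a matrix of $G(F)$ with $\kappa_3(Q_0)=\lambda(Q_0)$. Setting $\kappa_2:=\kappa_3^{-1}\circ\lambda$ yields a projective collineation that fixes $F$ (since both $\kappa_3$ and $\lambda$ do) and satisfies $\kappa_2(Q_0)=\kappa_3^{-1}(\lambda(Q_0))=Q_0$. Then $\lambda=\kappa_3\circ\kappa_2$, whence $\kappa=\kappa_3\circ\kappa_2\circ\kappa_1$, as required. The main obstacle is the middle step, namely pinning down $g_{\infty}$ intrinsically so as to guarantee $\lambda(Q_0)\notin g_{\infty}$; the directrix argument above is the geometric route, and an alternative is to identify $g_{\infty}$ with the set of singular points of $F$ (the common zeros of all partial derivatives of $f$), which is manifestly invariant under any collineation fixing $F$.
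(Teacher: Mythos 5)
Your proof is correct and takes essentially the same route as the paper, which obtains this proposition simply by ``summing up'' its three lemmas: split off the field automorphism via Lemma~\ref{lem:semilin}, observe that the remaining projective collineation fixes $F$, and normalize the image of $Q_0$ using the transitivity of $G(F)$ from Lemma~\ref{transitiviteats-thm}, exactly as you do. Your explicit check that $\lambda(g_\infty)=g_\infty$ (via the uniqueness of the directrix, so that $\lambda(Q_0)\in F\setminus g_\infty$ and Lemma~\ref{transitiviteats-thm} applies) correctly fills in a step the paper leaves implicit at this point and only argues later, in the proof of Theorem~\ref{thm:stabil}, through the equivalent observation that the points of $g_\infty\setminus\{Q_3\}$ are the only points of $F$ lying on two distinct generators.
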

We are thus lead to our first main result:

\begin{theorem}\label{thm:stabil}
Let $\Stab(F,Q_0)$ be the group of all projective collineations of $\PP_3(K)$
which stabilize $F$ and the base point $Q_0$. Depending on the ground field
$K$, this stabilizer is determined by the following subgroups of $\GL_4(K)$.
\begin{eqnarray}
    |K|=2       &:& \{ M_{0,0,1}, N \},\\
    |K|=3       &:& \{ M_{0,0,c}, N_c \mid c\in K^\times \},\\
    \label{eq:stabil4}
    |K|\geq 4   &:& \{ M_{0,0,c}\mid c\in K^\times \},
\end{eqnarray}
where
\begin{equation}\label{M-4-eq}
    N:= \Mat4{
      1 & 0 & 0 & 0 \\
      0 & 1 & 0 & 0 \\
      0 & 1 & 1 & 0 \\
      0 & 1 & 0 & 1   }
\mbox{ and }
    N_{c}:= \Mat4{
      1 & 0 & 0 & 0 \\
      0 & c & 0 & 0 \\
      0 & 0 & 2 & 0 \\
      0 & c & 0 & 2c    }.
\end{equation}
\end{theorem}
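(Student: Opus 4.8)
The plan is to separate the problem into a purely incidence-geometric reduction, which pins down the ``shape'' of the representing matrix uniformly in $K$, followed by a short algebraic computation on the defining cubic whose outcome genuinely depends on $|K|$. Let $\kappa\in\Stab(F,Q_0)$ be represented by $A\in\GL_4(K)$, unique up to a scalar. Since $\kappa$ maps lines contained in $F$ to lines contained in $F$, it permutes the generators. The directrix $g_{\infty}$ is intrinsically characterised as the unique generator meeting \emph{every} other generator, so $\kappa(g_{\infty})=g_{\infty}$; this holds already for $|K|=2$, where there are three generators and only $g_{\infty}$ meets the other two. As $Q_0$ is an affine point, it lies on a single generator, namely $g(1,0)=Q_0Q_2$, so $\kappa$ fixes this generator and hence fixes $Q_2=g(1,0)\cap g_{\infty}$. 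Finally $Q_3$ is the only point of $g_{\infty}$ incident with just one generator, so $\kappa(Q_3)=Q_3$ as well.

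Next I would pin down the plane at infinity. Among the pencil of planes through $g_{\infty}$, namely $\cV(\lambda X_0+\mu X_1)$, every plane with $\mu\neq0$ meets $F$ in $g_{\infty}$ together with a second generator by (\ref{eq:2geradenschnitt}), whereas $\omega=\cV(X_0)$ is the unique member of the pencil with $F\cap\omega=g_{\infty}$. This is again a statement about the point set alone and stays valid for $|K|=2,3$, so $\kappa(\omega)=\omega$. Normalising the $(0,0)$-entry of $A$ to $1$, fixing $Q_0,Q_2,Q_3$ forces $e_0$, $a_{22}e_2$, $a_{33}e_3$ to be the zeroth, second and third columns of $A$, while fixing $\omega$ forces the $(0,1)$-entry to vanish; thus the only free column is $(0,a_{11},a_{21},a_{31})^{\T}$ with $a_{11},a_{22},a_{33}\in K^{\times}$. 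In particular $A$ now maps affine points of $F$ to affine points.

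With $A$ in this reduced form I would impose $f\big(A\,P(u_1,u_2)\big)=0$ for all $(u_1,u_2)\in K^2$, which after substitution of the parametrisation becomes the requirement that
\[
 \Phi(u_1,u_2)=-a_{31}u_1+a_{11}a_{21}u_1^2+(a_{33}-a_{11}^3)u_1^3+(a_{11}a_{22}-a_{33})u_1u_2
\]
vanish as a \emph{function} on $K^2$. Being linear in $u_2$, this forces $a_{33}=a_{11}a_{22}$ and the vanishing on $K$ of the cubic $B(u_1):=-a_{31}u_1+a_{11}a_{21}u_1^2+(a_{33}-a_{11}^3)u_1^3$. For $|K|\geq4$ a cubic with at least four roots is the zero polynomial, so $a_{21}=a_{31}=0$, $a_{33}=a_{11}^3$ and $a_{22}=a_{11}^2$; hence $A=\diag(1,a_{11},a_{11}^2,a_{11}^3)=M_{0,0,a_{11}}$, which is exactly (\ref{eq:stabil4}).

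The real obstacle is the failure, for $|K|=2,3$, of the step ``$B$ vanishes as a function $\Rightarrow$ $B$ is the zero polynomial'': this is precisely where stabilising $F$ \emph{as a point set} becomes strictly weaker than fixing $f$ up to a factor. I would therefore solve the function-vanishing conditions directly over each small field. Over $\mathbb{F}_{3}$ one may have $B(u_1)=\gamma(u_1^3-u_1)$, and using $a_{11}^3=a_{11}$ the conditions return the family $a_{21}=0$, $a_{33}=a_{11}a_{22}$, $a_{31}=a_{11}(a_{22}-1)$; its four members, indexed by $a_{11},a_{22}\in\{1,2\}$, are exactly $M_{0,0,c}$ and $N_c$ for $c\in K^{\times}$. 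Over $\mathbb{F}_{2}$ all diagonal entries are forced to $1$ and only $B(1)=0$ survives, giving the single relation $a_{21}=a_{31}$ and hence precisely $M_{0,0,1}$ and $N$. This finite bookkeeping, not any lengthy calculation, is the crux of the proof.
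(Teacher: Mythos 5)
Your proof is correct and takes essentially the same route as the paper: the incidence-geometric reduction (invariance of $g_{\infty}$, $Q_3$, $\omega$, and $Q_2$, forcing the triangular matrix shape) is identical, and your vanishing condition is equivalent to the paper's, since after substituting $a_{33}=a_{11}a_{22}$ your cubic $B(u_1)$ is exactly $a_{11}u_1$ times the quadratic identity (\ref{quad-poly-in-s}), so the three case distinctions $|K|=2$, $|K|=3$, $|K|\geq 4$ play out the same way. The only (cosmetic) difference is that you obtain this identity by substituting the parametrization $P(u_1,u_2)$ directly into $f(\bX)$, whereas the paper derives it from the requirement that each generator $g(1,t)$ be mapped to a generator $g(1,s)$ via equations (\ref{eq:1})--(\ref{eq:3}); like the paper, you leave implicit the easy converse check that the listed matrices indeed stabilize $F$ (immediate here, since your condition $\Phi\equiv 0$ characterizes exactly the shape-reduced matrices mapping $F$ into itself, and each solution set is closed under inversion).
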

\begin{proof}
Let $\sigma\in\Stab(F,Q_0)$. We saw at the end of Section~\ref{se:prelim} that
only the points of $g_\infty\setminus\{Q_3\}$ are on two distinct generators,
whereas each other point of $F$ is incident with one generator only. Thus
$\sigma(g_{\infty})=g_{\infty}$, and $\sigma(Q_3)=Q_3$. Also, $\omega$ is the
only plane through $g_\infty$ which does not contain a second generator, so
that $\sigma(\omega)=\omega$. Also, since $Q_0$ is fixed, so is the only
generator $g(1,0)$ through this point, whence $g(1,0)\cap g_\infty$, i.e.\ the
base point $Q_2$, is fixed too. Consequently, $\sigma$ is induced by a lower
triangular matrix
\begin{equation*}
(x_{ij})=\Mat4{
  1 & 0 & 0 & 0 \\
  0 & x_{11} & 0 & 0 \\
  0 & x_{21} & x_{22} & 0 \\
  0 & x_{31} & 0 & x_{33}  } \in\GL_4(K).
\end{equation*}
It remains to determine the unknown entries of this matrix, where obviously
\begin{equation}\label{eq:det}
 \det(x_{ij})= x_{11}x_{22}x_{33}\neq 0.
\end{equation}
First, fix a scalar $t\in K^\times$ and consider the generator $g(1,t)$. There
is an $s\in K^\times$ such that $\sigma\big(g(1,t)\big)=g(1,s)$. Thus for each
$\lambda\in K$ exists an element $\mu\in K$ with $\sigma(K (1, t,
t^2+\lambda,\lambda t)^{\T})=K (1, s, s^2+\mu,\mu s)^{\T}$. So
\begin{eqnarray}
  x_{11}t &=& s, \label{eq:1}\\
  x_{21}t+x_{22}\left(t^2+\lambda\right) &=& s^2+\mu, \label{eq:2}\\
  x_{31}t+x_{33} \lambda t &=&\mu s .\label{eq:3}
\end{eqnarray}
We divide (\ref{eq:3}) by $s$, subtract it then from (\ref{eq:2}), and
substitute $s=x_{11}t$ according to (\ref{eq:1}). Hence
\begin{equation}\label{eq:4}
  x_{21}t + x_{22}t^2 - \frac{x_{31}}{x_{11}}+
                              \left(x_{22}  - \frac{x_{33}}{x_{11}}\right)\lambda
  =
  x_{11}^2t^2 \mbox{ for all }\lambda\in K.
\end{equation}
This implies
\begin{equation}\label{eq:x_33}
  x_{33} = x_{11}x_{22}.
\end{equation}
Next, we assume $t$ to be variable, whence (\ref{eq:4}) gives
\begin{equation}\label{quad-poly-in-s}
    \left({x_{22}} - {x_{11}^2}\right)t^2 + {x_{21}}t-\frac{x_{31}}{x_{11}}=0
 \mbox{ for all } t\in K^{\times}.
\end{equation}
According to the cardinality of $K$ there are three cases:

\par

$|K|=2$: By (\ref{eq:det}), $x_{11}=x_{22}=x_{33}=1$ and (\ref{quad-poly-in-s})
reads $x_{21}\cdot 1=x_{31}$, so that $(x_{ij})=M_{0,0,1}$ or $(x_{ij})=N$.

\par

$|K|=3$: Then $x_{11}^2=1=t^2$ for $t \in \{1,2\}= K^{\times}$, and there are two
possibilities: (i) $x_{22}=1$, whence (\ref{quad-poly-in-s}) reads
$x_{21}t-{x_{31}}/{x_{11}}=0$ for $t \in \{1,2\}$, so that $x_{21}=x_{31}=0$,
and $(x_{ij})=M_{0,0,c}$ with $c:=x_{11}$. (ii) $x_{22}=2$, whence
(\ref{quad-poly-in-s}) turns into $1 \cdot 1 +x_{21}t-{x_{31}}/{x_{11}}=0$ for
$t \in \{1,2\}$, so that $x_{21}=0$, $x_{31}=x_{11}$, and $(x_{ij})=N_{c}$ with
$c:=x_{11}$.

\par

$|K| \geq 4$: From $|K^{\times}| \geq 3$ and (\ref{quad-poly-in-s}) follows
$x_{22}=x_{11}^2$, $x_{21}=x_{31}=0$, and $(x_{ij})=M_{0,0,c}$ with
$c:=x_{11}$.

\par

In either case it is easy to see that the given matrices form a subgroup of
$\GL_4(K)$.
\end{proof}
We denote by $G_\mathrm{ext}(F)$ the \emph{extended group\/} of the Cayley
surface, i.e. the group of \emph{all\/} matrices $(x_{ij})_{0\leq i,j,\leq
3}\in\GL_4(K)$, subject to the condition $x_{00}=1$, leaving invariant the
Cayley surface $F$. By the first paragraph of the previous proof, each
automorphic projective collineation of $F$ is induced by precisely one matrix
in $G_\mathrm{ext}(F)$. Furthermore, for $|K| \geq 4$, we have
$G(F)=G_\textrm{ext}(F)$, whereas for $|K| \leq 3$ the groups $G(F)$ and
$G_\textrm{ext}(F)$ are distinct, since none of the matrices $N$ and $N_c$ is
in $G(F)$. We are thus lead to the following result:

\begin{proposition}
Let
\begin{equation}\label{eq-andere_poly_fuer_F}
    f_{(|K|)}(\bX):=\left\{\!\!
\renewcommand{\arraystretch}{1.}
\begin{array}{l}
    X_0X_1^2+X_0X_1X_2+X_1^3+ X_0^2X_1+X_0^2X_3\mbox{ when }|K|=2, \\
    2X_0X_1X_2+2X_1^3+2X_0^2X_1+X_0^2X_3\mbox{ when }|K|=3.
\end{array}
\right.\!
\end{equation}
Then, for $|K|\leq 3$, the Cayley surface $F=\cV\big(f(\bX)\big)$ coincides
with $\cV\big(f_{(|K|)}(\bX)\big)$.
\end{proposition}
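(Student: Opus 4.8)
The plan is to exploit the correspondence between form-automorphisms and collineations established at the beginning of Section~\ref{se:koll}: if a matrix $M$ stabilizes $F=\cV\big(f(\bX)\big)$ as a point set and $h(\bX)$ denotes the image of $f(\bX)$ under the $K$-algebra automorphism of $K[\bX]$ induced by $M$ according to (\ref{eq:poly_aktion}), then the collineation induced by $M$ maps $\cV\big(h(\bX)\big)$ onto $\cV\big(f(\bX)\big)=F$. As this collineation fixes $F$, applying its inverse gives $\cV\big(h(\bX)\big)=F$. It therefore suffices to exhibit, for each small field, a stabilizing matrix whose associated automorphism sends $f(\bX)$ to $f_{(|K|)}(\bX)$ (or to a non-zero multiple of it). The natural candidates are exactly the ``extra'' generators of $\Stab(F,Q_0)$ supplied by Theorem~\ref{thm:stabil}, namely $N$ for $|K|=2$ and $N_c$ for $|K|=3$; since these do not belong to $G(F)$, they cannot fix $f(\bX)$ up to a scalar, so they are precisely what should produce a second defining form.

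First I would handle $|K|=2$. By Theorem~\ref{thm:stabil} the matrix $N$ lies in $\Stab(F,Q_0)$ and hence stabilizes $F$. Reading its rows through (\ref{eq:poly_aktion}) gives the substitution $X_0\mapsto X_0$, $X_1\mapsto X_1$, $X_2\mapsto X_1+X_2$, $X_3\mapsto X_1+X_3$. Inserting this into $f(\bX)=X_0X_1X_2-X_1^3-X_0^2X_3$ and collecting terms yields $h(\bX)=X_0X_1^2+X_0X_1X_2-X_1^3-X_0^2X_1-X_0^2X_3$, which in characteristic two, where every minus sign becomes a plus, is exactly $f_{(2)}(\bX)$. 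Hence $\cV\big(f_{(2)}(\bX)\big)=F$.

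Next I would handle $|K|=3$, taking the member $N_c$ with $c=1$ for simplicity. Its rows yield $X_0\mapsto X_0$, $X_1\mapsto X_1$, $X_2\mapsto 2X_2$, $X_3\mapsto X_1+2X_3$, and substitution into $f(\bX)$ gives $h(\bX)=2X_0X_1X_2-X_1^3-X_0^2X_1-2X_0^2X_3$. Using $-1=2$ and $-2=1$ in characteristic three, this equals $f_{(3)}(\bX)$, so $\cV\big(f_{(3)}(\bX)\big)=F$ as well. One may observe, as a consistency check, that an arbitrary $c\in K^\times$ merely multiplies $h(\bX)$ by $c$ (using $c^3=c$ when $|K|=3$), and hence defines the same surface.

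Since the argument consists solely of substitution and simplification, I expect no genuine obstacle; the only point requiring care is the sign arithmetic in the relevant small characteristic. The conceptual content is already contained in Theorem~\ref{thm:stabil}, whose additional collineations $N$ and $N_c$ are exactly what forces the existence of a non-proportional cubic form defining $F$ when $|K|\leq 3$.
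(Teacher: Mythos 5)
Your proof is correct and takes essentially the same approach as the paper: the paper's own proof likewise obtains $f_{(2)}(\bX)$ as the image of $f(\bX)$ under the action of $N$, and $c\,f_{(3)}(\bX)$ with $c\in\{1,2\}$ as the image under $N_c$, relying on the fact that these matrices from Theorem~\ref{thm:stabil} stabilize $F$. Your explicit substitutions, the inverse-collineation argument from the start of Section~\ref{se:koll}, and the scalar factor $c$ (via $c^3=c$) all match the paper's (terser) computation.
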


\begin{proof}
Let $|K|=2$. The image of $f(\bX)$ under the action of $N$ gives the polynomial
$f_{(2)}(\bX)$. Likewise, for $|K|=3$, the polynomial $f_{(3)}(\bX)$,
multiplied by $c\in\{1,2\}$, arises as the image of $f(\bX)$ under the action
of $N_c$.
\end{proof}
Observe that here ``to coincide'' just refers to sets of points and not to
algebraic varieties in the sense of \cite[p.~48]{hirs-98}. Thus, for $|K|\leq
3$, the point set of the Cayley surface $F$ may also be considered as the
algebraic curve $\cV\big(f(\bX),f_{|K|}(\bX)\big)$.
\end{section}

\begin{section}{All cubic forms defining $F$}\label{se:alle}

In discussing the Cayley surface $F$ we have to distinguish between properties
which stem from the defining polynomial $f(\bX)$ and geometric properties,
i.e., properties which are invariant with respect to the action of the group
$G_\textrm{ext}(F)$. First, we recall some notions which can be defined in
terms of $f(\bX)$. Let $\partial_i:=\frac{\partial}{\partial X_i}$. We start by
calculating the partial derivatives
\begin{equation}\label{partial-eq}
\begin{array}{rcl@{\quad}rcl}
    \partial_0 f(\bX) &=& X_1X_2-2X_0X_3,&
    \partial_1 f(\bX) &=& X_0X_2-3X_1^2,\\
    \partial_2 f(\bX) &=& X_0X_1,&
    \partial_3 f(\bX) &=& -X_0^2.
\end{array}
\end{equation}
They vanish simultaneously at $(p_0,p_1,p_2,p_3)^{\T} \in K^{4 \times 1}$ if,
and only if, at least one of the following conditions holds:
\begin{eqnarray}
    \label{partial-null-eins-eq}
    & p_0=p_1=0; &\\
    \label{partial-null-zwei-eq}
    & p_0=p_2=0 \mbox{ and } \Char K=3. &
\end{eqnarray}
When $K$ is a field of characteristic $\Char K=3$ then, by
(\ref{partial-null-zwei-eq}), $\cV(X_0,X_2)$ is a distinguished line in the
ambient space of the Cayley surface $F$. Each of its points is a {\em nucleus}
of $F$. See \cite[p.~50]{hirs-98} and \cite[Proposition~3.17]{definis+d-83},
where nuclei are defined in a slightly different way. All points subject to
(\ref{partial-null-eins-eq}) are \emph{singular}; they comprise the line
$g_\infty\subset F$. We obtain, for all $s_2, s_3 \in K$, that
\begin{equation*}
  f\big((0,0,s_2,s_3)+T\bX\big)=
  T^2X_0(s_2X_1-s_3X_0)+ T^3(*)  \in K[\bX,T].
\end{equation*}
Hence all points of $g_{\infty}$ are \emph{double points}. The \emph{tangent
cone\/} (see \cite[p.~49]{hirs-98}, where the term \emph{tangent space\/} is
used instead) at a point $Y=K(0,0,s_2,s_3)^\T$, $(s_2,s_3)\neq (0,0)$, is
\begin{equation}\label{eq:tangential-unendlich}
  \cV\big(X_0(s_2X_1-s_3X_0)\big),
\end{equation}
whence we refer to the plane at infinity as the \emph{tangent plane\/} at
$Y=Q_3$. For $Y=K(0,0,1,s_3)^\T$ the tangent cone is the union of the plane at
infinity and the plane spanned by $g_{\infty}$ and the generator $g(1,s_3)$. We
call each of these planes a \emph {tangent plane\/} at $Y$. By
(\ref{partial-eq}), all points of $F\setminus g_\infty$ are \emph{simple}. The
\emph{tangent plane\/} at $P(u_1,u_2)$ (see (\ref{eq:P(u,v)})) equals
\begin{equation}\label{tangentialebene-eq}
   \cV\left( (2u_1^3-u_1u_2)X_0 + (-3u_1^2+u_2)X_1 + u_1X_2 - X_3\right).
\end{equation}

\par
Next, we present a characterization of tangent planes:

\begin{proposition}\label{prop:tg-ebenen}
Let $\tau=\cV\left(\sum_{i=0}^{3}a_iX_i\right)$, where $a_i\in K$, be a plane.
Then the following assertions are equivalent.
\begin{enumerate}
    \item $\tau$ is a tangent plane of $F$ with respect to $f(\bX)$.

    \item\label{eq:dualcf} The coefficients $a_i$ satisfy the equation
              $a_0a_3^2 - a_1a_2a_3 + a_2^3 =0$.

    \item $\tau$ contains a generator of $F$.
\end{enumerate}
\end{proposition}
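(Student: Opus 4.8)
The plan is to establish the three equivalences in the cyclic order (a)$\Rightarrow$(b)$\Rightarrow$(c)$\Rightarrow$(a), since each individual implication is then the most direct. I would begin by setting up a clean description of what ``tangent plane with respect to $f(\bX)$'' means, splitting $F$ into its simple affine points $P(u_1,u_2)$ and the singular/double points of $g_\infty$, because the tangent-plane data has already been computed separately for both strata in (\ref{tangentialebene-eq}) and in (\ref{eq:tangential-unendlich}).

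For (a)$\Rightarrow$(b): if $\tau$ is the tangent plane at an affine point $P(u_1,u_2)$, then by (\ref{tangentialebene-eq}) its coefficient vector is proportional to
\begin{equation*}
 (a_0,a_1,a_2,a_3)=\bigl(2u_1^3-u_1u_2,\,-3u_1^2+u_2,\,u_1,\,-1\bigr).
\end{equation*}
I would substitute these values directly into $a_0a_3^2-a_1a_2a_3+a_2^3$ and verify it vanishes identically in $u_1,u_2$; this is a short explicit calculation. For the double points $Y=K(0,0,1,s_3)^\T$ the two tangent planes are $\omega=\cV(X_0)$ and the plane carrying $g(1,s_3)$; plugging the coefficient vector of each into the cubic expression and checking it vanishes disposes of the infinite stratum. (The vector $(1,0,0,0)$ for $\omega$ gives $a_0a_3^2-a_1a_2a_3+a_2^3=0$ trivially, and the remaining plane is handled the same way.)

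For (b)$\Rightarrow$(c): here I regard the equation $a_0a_3^2-a_1a_2a_3+a_2^3=0$ as a condition on the dual coordinates and show that every such plane contains a generator $g(s_0,s_1)$. The natural approach is to parametrize: a plane $\tau$ contains a generator iff there is some $(s_0,s_1)\neq(0,0)$ with both points of (\ref{erzeugung-eq}), namely $K(s_0^2,s_0s_1,s_1^2,0)^\T$ and $K(0,0,s_0,s_1)^\T$, lying in $\tau$. Writing out $\sum a_iX_i=0$ at these two points yields two homogeneous relations in $(s_0,s_1)$, and I would argue that (b) is exactly the resultant/consistency condition guaranteeing a common nonzero solution. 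I expect this to be the main obstacle: one must handle the degenerate case $a_3=0$ (which forces $a_2=0$ by (b), so the plane contains $g_\infty=g(0,1)$) separately from $a_3\neq 0$, and in the latter case show the cubic in (b) factors so as to produce an explicit ratio $s_1/s_0$. Finally (c)$\Rightarrow$(a) is the geometric step: a plane containing a generator meets $F$ (a cubic surface) in that line plus a residual conic, and along the generator the plane is tangent to $F$; invoking the stratum-wise tangent-cone computations from (\ref{eq:tangential-unendlich}) and (\ref{tangentialebene-eq}) identifies $\tau$ as the tangent plane at the appropriate point. The delicate point throughout will be the characteristic-dependent behaviour flagged after (\ref{partial-null-zwei-eq}): when $\Char K=3$ the line of nuclei $\cV(X_0,X_2)$ intervenes, so I would check that the argument for (c)$\Rightarrow$(a) does not secretly require a nonzero discriminant that fails in characteristic $3$.
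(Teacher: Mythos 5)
Your proposal is correct and takes essentially the same route as the paper's proof: the same cycle (a)$\Rightarrow$(b)$\Rightarrow$(c)$\Rightarrow$(a), the same substitution of the coefficient vector from (\ref{tangentialebene-eq}) (with $a_2=a_3=0$ at the double points) for (a)$\Rightarrow$(b), the same case split $a_3=0$ versus $a_3\neq 0$ for (b)$\Rightarrow$(c), and the same appeal to (\ref{eq:tangential-unendlich}) and (\ref{tangentialebene-eq}) for (c)$\Rightarrow$(a). The step you flagged as the main obstacle is in fact immediate --- the incidence condition at $K(0,0,s_0,s_1)^\T$ is \emph{linear} in $(s_0,s_1)$, so for $a_3\neq 0$ one takes $(s_0,s_1)=(a_3,-a_2)$ and the condition at $K(s_0^2,s_0s_1,s_1^2,0)^\T$ reduces exactly to the cubic in (b) (the paper normalizes $a_3=-1$ and exhibits $g(1,a_2)\subset\tau$ directly), with no resultant machinery or characteristic-$3$ care needed; only your phrase ``along the generator the plane is tangent'' should be corrected to tangency \emph{at one point} of the generator, since the tangent plane varies along it.
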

\begin{proof}
(a) $\Rightarrow$ (b): If $\tau$ is the tangent plane at an affine point of $F$
then the coefficients $a_i$ are proportional to the coefficients of a
polynomial as in (\ref{tangentialebene-eq}), otherwise we obtain $a_2=a_3=0$.
In any case (\ref{eq:dualcf}) holds.

\par

(b) $\Rightarrow$ (c): If $a_3=0$ then so is $a_2$. Consequently
$g_\infty\subset \tau$. If $a_3\neq 0$ then we may let w.l.o.g. $a_3=-1$,
whence $g(1,a_2)\subset\tau$.

\par

(c) $\Rightarrow$ (a): This is immediate from (\ref{eq:tangential-unendlich})
and (\ref{tangentialebene-eq}).
\end{proof}
By (\ref{eq:dualcf}), the set of all tangent planes with respect to $f(\bX)$ is
a Cayley surface in the dual projective space. In view of
(\ref{tangentialebene-eq}), it is somewhat surprising that this holds
irrespective of the characteristic of $K$.

\par

Clearly, the notions from the above are not independent of the homogeneous
polynomial which is used for defining $F$. For example, we have
$F=\cV\big(f(\bX)^2\big)$, but no point of $F$ is simple with respect to
$f(\bX)^2$. However, by restricting ourselves to cubic forms defining $F$, we
obtain the next two theorems.

\begin{theorem}
Let $|K|\geq 3$ and suppose that $p(\bX)\in K[\bX]$ is a cubic form such that
$\cV\big(p(\bX)\big)$ equals the Cayley surface $F=\cV\big(f(\bX)\big)$. Then
$p(\bX)\sim f(\bX)$ or, only for $|K|=3$, $p(\bX)\sim f_3(\bX)$, where
$f_3(\bX)$ is given by \emph{(\ref{eq-andere_poly_fuer_F})}.
\end{theorem}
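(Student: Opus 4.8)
The plan is to use the hypothesis $\cV(p)=F$ in both directions: $p$ must vanish at every point of $F$, and $p$ must vanish at no point outside $F$. The decisive input of $|K|\geq 3$ is that it lets one pass from points to forms. Every line carries $|K|+1\geq 4>3=\deg p$ points, so a cubic form vanishing at all points of a line vanishes identically along it. Since each generator lies in $F=\cV(p)$, the form $p$ therefore vanishes identically on every generator. Applying this to the directrix $g_\infty=\cV(X_0,X_1)$ of (\ref{g-unendlich-eq}) shows that $p$ contains no monomial in $X_2,X_3$ alone; equivalently $p\in(X_0,X_1)$, and so the restriction to the plane at infinity factors as $p|_\omega=X_1\,C$, where $C$ is a quadratic form in $X_1,X_2,X_3$.

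The crucial step is the analysis of $p|_\omega$. Because $F\cap\omega=g_\infty$ and $\cV(p)=F$, the plane cubic $p|_\omega$ vanishes exactly on the line $\{X_1=0\}$ of $\omega$, so the conic $C$ has no zero off that line. Over a finite field every ternary quadratic form is isotropic; hence an irreducible $C$ would carry $|K|+1\geq 4$ points, far too many to lie on a single line. Thus $C$ splits into two linear factors, each necessarily proportional to $X_1$, and $p|_\omega=c\,X_1^3$ with $c\neq 0$. In other words, apart from $X_1^3$ itself, the form $p$ contains none of the $X_0$-free monomials $X_1^2X_2,\,X_1^2X_3,\,X_1X_2^2,\,X_1X_2X_3,\,X_1X_3^2$. (For infinite $K$ this step can be bypassed: vanishing of $p$ on the affine part is then a genuine polynomial identity and settles $p\sim f$ at once.)

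With those monomials eliminated I would substitute the parametrisation (\ref{eq:P(u,v)}) of a single generator — fix $u_1=t$ and let $u_2$ vary — into $p$. The resulting polynomial in the generator parameter must vanish identically, and its coefficients are polynomials in $t$; the point of the previous reductions is that these now have degree at most $3$ rather than $5$. For $|K|\geq 4$ a polynomial of degree $\leq 3$ vanishing on all of $K$ is zero, and comparing coefficients forces every coefficient of $p$ to agree with those of $f$, i.e. $p\sim f$. For $|K|=3$ the same polynomials, read modulo $t^3-t$, leave a two-dimensional solution space $\langle f,h_3\rangle$, where $h_3:=X_1^3-X_0^2X_1=\prod_{c\in K}(X_1-cX_0)$ vanishes identically on $F$. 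Here one invokes $\cV(p)=F$ a final time: writing $p=\alpha f+\beta h_3$, one needs $\alpha\neq 0$ (otherwise $p$ splits into three planes) and $p|_\omega=(\beta-\alpha)X_1^3\neq 0$, so $\beta\neq\alpha$. This leaves exactly the two classes $p\sim f$ and $p\sim f_3$, the latter since $f_3=-f+h_3$ by (\ref{eq-andere_poly_fuer_F}).

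The main obstacle is uniformity over the small finite fields. A direct coefficient comparison fails for $|K|=4,5$, because the naive vanishing relations are polynomial identities of degree up to $5$ in $t$, and these cannot be read off from only $|K|$ sample values; this is precisely the difficulty that defeats a brute-force symbolic computation. The geometric reduction on $\omega$ in the second step is exactly what lowers the relevant degrees to $3$ and thereby secures the conclusion already for $|K|\geq 4$. The second delicate point is $|K|=3$: there $h_3$ restricts to $0$ on every affine plane $\cV(X_1-tX_0)$ (since $t^3-t=0$ on $K$) and becomes visible only on the plane at infinity, which is why it produces a genuinely different defining cubic $f_3$ and why the final appeal to $\cV(p)=F$ on $\omega$ is indispensable.
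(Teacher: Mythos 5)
Your overall architecture (kill the $X_2,X_3$-only monomials via $g_\infty$, force $p|_\omega\sim X_1^3$, then substitute the parametrization of the generators and compare coefficients, with $|K|=3$ handled through $h_3:=X_1^3-X_0^2X_1$) is attractive and, where correct, leaner than the paper's proof; your endgame for $|K|=3$, including the side conditions $\alpha\neq 0$ and $\beta\neq\alpha$, matches the paper's identity (\ref{eq:identitaet}). But your second step contains a genuine gap, and it sits exactly at the crux. From $\cV(p)\cap\omega=g_\infty$ you only know that the conic $C$ has no zero off the line $X_1=0$ of $\omega$; this is an \emph{upper} bound on its zero set, and your dichotomy ``irreducible $\Rightarrow$ $|K|+1$ points'' is false: a ternary quadratic form of rank two that is irreducible over $K$ (a pair of conjugate lines over the quadratic extension) has exactly \emph{one} $K$-rational point. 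Every finite field carries an anisotropic binary quadratic form $q$ (the norm form of its quadratic extension, also in characteristic $2$), and $C=q(X_1,X_2)$, or more generally $q(X_1,\lambda X_2+\mu X_3)$, vanishes only at the single point $X_1=X_2=0$ (resp.\ $X_1=\lambda X_2+\mu X_3=0$), which lies on the line $X_1=0$. So $p|_\omega=X_1\,q(X_1,X_2)$ passes your set-theoretic test on $\omega$ without being proportional to $X_1^3$, and the elimination of $a_{112},a_{113},a_{122},a_{123},a_{133}$ is not established. Since your parenthetical bypass covers only infinite $K$, the gap is live precisely for the finite fields the argument must handle; both the degree-$\leq 3$ count you need for $|K|=4,5$ and the two-dimensional solution space $\langle f,h_3\rangle$ for $|K|=3$ rest on the missing step. (The affine vanishing conditions alone do not rescue it: over $\mathbb{F}_4$, for instance, they leave extra forms such as $f(\bX)+X_0^2X_1+X_1^2X_2+X_0X_1X_3$, which vanishes at every affine point of $F$, and only a finer analysis on $\omega$ removes them.)

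Note that this is exactly where the paper invests most of its effort. The set-theoretic condition $\cV(p)\cap\omega=g_\infty$ is strictly weaker than what is needed, and the paper upgrades it to multiplicity information through its auxiliary results (I)--(IV): every affine point of $F$ is simple with respect to $p(\bX)$; the plane $\cV(X_3)$ is a tangent plane at some affine point of $g(1,0)$; tangent planes at affine points avoid $Q_3$; hence every point of $g_\infty$ is a \emph{double} point with respect to $p(\bX)$. Consequently every line of $\omega$ other than $g_\infty$ meets $F$ with multiplicity three at its point of $g_\infty$, and this is what forces $p|_\omega\sim X_1^3$ --- your claim, but proved through tangency rather than point sets, and it is precisely the step that rules out the conjugate-line-pair conic. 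If you supply an independent proof that all points of $g_\infty$ are double points of $\cV\big(p(\bX)\big)$, or otherwise exclude the anisotropic $C$, the remainder of your argument goes through and gives a tidier organization of the coefficient comparison in part (c) of the paper's proof.
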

\begin{proof}
(a) Suppose that $p(\bX) = \sum_{0 \leq i \leq j \leq k \leq 3}
a_{ijk}X_iX_jX_k$ is a form of degree three such that $\cV\big(p(\bX)\big)=F$.
We aim at finding the twenty coefficients $a_{ijk}$ to within a common non-zero
factor, and we adopt the following convention: Within this proof, concepts like
``simple point'', ``double point'', ``intersection multiplicity'', and
``tangent plane'' are tacitly understood with respect to $p(\bX)$, unless
explicitly stated otherwise.

\par

Obviously, $Q_0,Q_2,Q_3\in F$, whereas $Q_1\notin F$. Hence
\begin{equation*}\label{}
    a_{111}\neq a_{000}=a_{222}=a_{333}=0.
\end{equation*}
The line $g(1,0)$ is on $F$, whence
$p\big((1,0,t,0)^\T\big)=t(a_{002}+a_{022}t)=0$ for all $t\in K$. From $|K|\geq
3$, we obtain
\begin{equation*}\label{}
    a_{002}=a_{022}=0.
\end{equation*}
Likewise, $g_\infty\subset F$ forces
\begin{equation*}\label{}
    a_{223}=a_{233}=0.
\end{equation*}

\par

(b) We proceed by establishing four auxiliary results:

\par

(I) \emph{Each affine point $Y\in F$ is simple\/}. It suffices to show that
there exists a line $m\ni Y$ such that $|m\cap F|= 3$, since such a line meets
$F$ at $Y$ with multiplicity one. First, let $Y=Q_0$. We consider the line
$m_0$, say, joining $Q_0$ and $K(0,1,\alpha+1,\alpha)^\T\notin F$, where
$\alpha\in K\setminus\{0,1\}$; such an $\alpha$ exists by $|K|\geq 3$. The
intersection $m_0\cap F$ equals the set of all points
$K\big(1,\xi,\xi(\alpha+1),\xi\alpha\big)^\T$, $\xi\in K$, with
\begin{equation*}\label{}
    f\big((1,\xi,\xi(\alpha+1),\xi\alpha)^\T\big)= \xi^2(\alpha+1)-\xi^3-\xi\alpha =
    -\xi(\xi-1)(\xi-\alpha)=0.
\end{equation*}
Hence $m_0$ has the required property. By the transitive action of $G(F)$ on
$F\setminus g_\infty$, the assertion follows for all $Y\in F\setminus
g_\infty$.

\par

(II) \emph{$\cV(X_3)$ is the tangent plane of at least one affine point, say
$R$, on $g(1,0)$.} We know from (\ref{kegelschnitt-eq}) that $F\cap\cV(X_3)$ is
the union of the generator $g(1,0)$ and the parabola $l$. Hence $p(\bX)=
q(\bX)X_1+X_3(*)$, where
\begin{equation*}\label{}
  q(\bX)=a_{001}X_0^2+a_{011}X_0X_1+a_{012}X_0X_2+a_{111}X_1^2+
     a_{112}X_1X_2+a_{122}X_2^2.
\end{equation*}
The planar quadric $\widetilde l:=\cV(q(\bX),X_3)$ and the parabola $l$ have
the same points outside the line $g(1,0)$. There are at least two such points
because of $|l|=|K|+1\geq 4$. Therefore $|\widetilde l|\geq 2$. It is well
known that a planar quadric with at least two points is either a
(non-degenerate) conic, a pair of distinct lines, or a repeated line. As
$g(1,0)$ is the only line contained in $F\cap\cV(X_3)$, we see that $\widetilde
l$ has to be a conic. If $|K|$ is finite then $|l|=|\widetilde l|$ implies that
$|g(1,0)\cap\widetilde l|=2$; thus we can choose a point $R\in
\big(g(1,0)\cap{\widetilde l}\,\big)\setminus\omega$. If $|K|$ is infinite then
$l$ and $\widetilde l$ have infinitely many common points outside $g(1,0)$. So
we obtain $l=\widetilde l$, and we let $R:=Q_0$. In any case, the tangent plane
of $F$ at $R$ contains the tangent of $\widetilde l$ at $R$ (which is also a
tangent of $F$ with respect to $p(\bX)$), and the generator $g(1,0)$. As these
two lines do not coincide, the tangent plane of $F$ at $R$ is $\cV(X_3)$.

\par

(III) \emph{The tangent plane at each affine point of $F$ does not pass through
$Q_3$.} The planar section $F\cap\cV(X_1)$ consists precisely of the two lines
$g(1,0)$ and $g_\infty$. Therefore
\begin{equation*}
 p(\bX)= X_0\underbrace{(a_{003}X_0+a_{023}X_2+a_{033}X_3)}
                       _{=:\,r(\bX)}X_3+X_1(*),
\end{equation*}
where $r(\bX)\sim X_0$ or $r(\bX)\sim X_3$, whence
\begin{equation}\label{eq:geradenpaar1}
    a_{023}=0.
\end{equation}
Moreover, precisely one of the coefficients $a_{003}$ and $a_{033}$ vanishes.
We claim that
\begin{equation}\label{eq:geradenpaar2}
    a_{003}\neq a_{033}=0.
\end{equation}
Assume to the contrary that $a_{003}=0\neq a_{033}$. Hence we would have
$p(\bX)=a_{033}X_0X_3^2+X_1(*)$. Then the line joining $Q_3$ with the point
$R\in g(1,0)$ from (II) would meet $F$ at $R$ with multiplicity two, whence the
tangent plane at the simple point $R$ would be $\cV(X_1)$, a contradiction to
(II).

\par

Next, choose any affine point $Y\in g(1,0)$. The line $Q_3Y$ meets $F$ at $Y$
with multiplicity one, due to (\ref{eq:geradenpaar1}) and
(\ref{eq:geradenpaar2}). Thus it is not a
 tangent, and the assertion follows for all affine points of $g(1,0)$.

\par

Finally, consider an arbitrary affine point $Y$ of $F$. By Lemma
\ref{transitiviteats-thm}, there exists a matrix $M_{a,b,c}\in G(F)$ taking
$Q_0$ to $Y$. Let $\widetilde p(\bX)$ be the image of $p(\bX)$ under the action
of $M_{a,b,c}$. So we obtain $\cV\big(\widetilde p(\bX)\big)=F$. From the
above, applied to the cubic form $\widetilde p(\bX)$, we infer that the
$\widetilde p(\bX)$-tangent plane of $F$ at $Q_0$ does not pass through $Q_3$,
whence the tangent plane of $F$ at $Y$ does not pass through $Q_3=\kappa(Q_3)$
either.

\par
(IV) \emph{All points $Z\in g_\infty$ are double points of $F$.} Let $Y$ be an
affine point of $F$ and $Z\in g_\infty$. The line $YZ$ is either a generator of
$F$, or we have $YZ\cap F=\{Y,Z\}$; cf.\ formula (\ref{eq:2geradenschnitt}). If
$YZ\notin F$ then, by (III), $YZ$ meets $F$ at $Y$ and $Z$ with multiplicities
one and two, respectively. As $Y$ varies in $F\setminus g_\infty$, the lines
$YZ$ generate the whole space. Thus $Z$ cannot be a simple point.

\par

(c) The planar section $F\cap\cV(X_0)$ equals the line $g_\infty$. By (IV), all
points of $g_\infty$ are double points. Thus each line at infinity $\neq
g_\infty$ meets $F$ at a point of $g_\infty$ with multiplicity three. So
\begin{equation*}\label{}
X_1^3\sim a_{111}X_1^3+a_{112} X_1^2X_2+a_{113} X_1^2X_3+a_{122}X_1X_2^2
+a_{123}X_1X_2X_3+a_{133}X_1X_3^2,
\end{equation*}
whence
\begin{equation*}\label{}
     a_{111}\neq a_{112}=a_{113}=a_{122}=a_{123}=a_{133}=0.
\end{equation*}
Now we consider the line through $Q_3$ and a point $P(u_1,u_2)$, where
$(u_1,u_2)\in K^2$. Since $Q_3$ is a double point of $F$, the intersection
multiplicity at $P(u_1,u_2)$ equals one. This implies, for all $(u_1,u_2)\in
K^2$,
\begin{eqnarray*}\label{}
  T &\sim& p\left(1,u_1,u_2,(u_1u_2-u_1^3) + T\right)^\T \nonumber \\
    &=&
     wT   
    +a_{001} u_1
    +a_{011} u_1^{2}
    +a_{012} u_1 u_2
    +a_{111} u_1^{3}
    +w(u_1u_2-u_1^{3})
  \in K[T],
\end{eqnarray*}
where we used the shorthand $w:=a_{003}+a_{013}u_1$. Since $w$ must not vanish,
we obtain
\begin{equation*}\label{}
     a_{013}=0.
\end{equation*}
We now substitute $u_1=1$ in the constant term of
$p\big(1,u_1,u_2,(u_1u_2-u_1^3) + T\big)^\T$. Hence
\begin{equation*}\label{}
     ( a_{003}+a_{012})u_2+a_{001}-a_{003}+a_{011}+a_{111}=0 \mbox{ for all } u_2\in K,
\end{equation*}
so that
\begin{equation*}\label{}
    a_{012}=-a_{003}.
\end{equation*}
Altogether, the constant term of $p\big(1,u_1,u_2,(u_1u_2-u_1^3) + T\big)^\T$
yields the identity
\begin{equation}\label{eq:identitaet}
     ( -a_{003}+a_{111}) u_1^{3}+a_{011}u_1^{2}+a_{001}u_1=0
      \mbox{ for all } u_1\in K.
\end{equation}
There are two cases:

\par

If (\ref{eq:identitaet}) holds trivially then $a_{111}=a_{003}\neq 0$,
$a_{011}=a_{001}=0$, and $p(\bX) \sim f(\bX)$. This has to be the case when
$|K|\geq 4$.

\par

If (\ref{eq:identitaet}) is a non-trivial identity in $u_1$ then, of course,
$|K|=3$. Up to a factor $\pm 1$, $T^3+2T\in K[T]$ is the only cubic polynomial
which vanishes for all elements of $K$. So we let $a_{001}:=2$, whence
$a_{111}=1+a_{003}$. However, $a_{111}$ and $a_{003}$ must not be zero. Thus,
finally, we arrive at $a_{111}=2$ and $a_{003}=1$, as required.
\end{proof}
In the proof from the above we repeatedly used the assumption $|K|\geq 3$. If
it is dropped then the situation changes drastically.

\begin{theorem}\label{thm:K=2}
Let $|K|= 2$ and let $p(\bX)\in K[\bX]$ be a cubic form. The Cayley surface
 $F=\cV\big(f(\bX)\big)$ coincides with $\cV\big(p(\bX)\big)$ if, and only if,
\begin{equation}\label{eq:nullfkt}
f(\bX)-p(\bX)= \sum_{0\leq i<j\leq 3} b_{ij}(X_i^2X_j+X_iX_j^2)
    \mbox{ with }b_{ij}\in K=\{0,1\}.
\end{equation}
\end{theorem}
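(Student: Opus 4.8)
The plan is to reduce the set-theoretic condition $\cV\big(p(\bX)\big)=\cV\big(f(\bX)\big)$ to an identity of \emph{polynomial functions} on $K^{4\times 1}$, and then to describe explicitly those cubic forms that induce the zero function. Since $|K|=2$, every point of $\PP_3(K)$ is of the form $K\bbp$ for a \emph{unique} vector $(0,0,0,0)^{\T}\ne\bbp\in K^{4\times 1}$, and $K\bbp\in\cV\big(g(\bX)\big)$ is equivalent to $g(\bbp)=0$. As the values of $f$ and $p$ lie in $K=\{0,1\}$, the two point sets coincide if, and only if, $f(\bbp)=0\Leftrightarrow p(\bbp)=0$ for every such $\bbp$; over $\{0,1\}$ this is the same as $f(\bbp)=p(\bbp)$ for \emph{all} $\bbp\in K^{4\times 1}$ (the zero vector being trivial). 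Writing $h(\bX):=f(\bX)-p(\bX)$, which is again a homogeneous cubic form because $f$ and $p$ are, the condition $\cV\big(p(\bX)\big)=F$ becomes: $h$ vanishes as a polynomial function (\ref{eq:polyfkt}) on all of $K^{4\times 1}$.

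It then remains to determine all homogeneous cubic forms $h\in K[\bX]$ whose associated polynomial function is identically zero. Here I would exploit that over $K=\{0,1\}$ one has $X_i^2=X_i$ as functions, so every monomial may be replaced by its \emph{square-free} (multilinear) reduction without changing the induced function. Applying this reduction to the three kinds of cubic monomials gives $X_i^3\mapsto X_i$ (degree one), $X_i^2X_j\mapsto X_iX_j$ for $i\ne j$ (degree two), and $X_iX_jX_k\mapsto X_iX_jX_k$ for distinct $i,j,k$ (unchanged, degree three).

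Writing the general cubic form as $h=\sum_i a_iX_i^3+\sum_{i\ne j}b_{ij}X_i^2X_j+\sum_{i<j<k}c_{ijk}X_iX_jX_k$ and collecting the reduced monomials by degree, I would read off that the multilinear reduction vanishes if, and only if, all degree-one coefficients $a_i$ vanish, all degree-three coefficients $c_{ijk}$ vanish, and for each pair $i<j$ the coefficient $b_{ij}+b_{ji}$ of $X_iX_j$ vanishes, i.e.\ $b_{ij}=b_{ji}$. Thus $h$ induces the zero function exactly when $h=\sum_{i<j}b_{ij}(X_i^2X_j+X_j^2X_i)$ with $b_{ij}\in K$, which is precisely the right-hand side of (\ref{eq:nullfkt}); this establishes both implications simultaneously.

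The only point requiring care — and hence the main obstacle — is the uniqueness of the square-free representative, that is, the fact that distinct multilinear polynomials induce distinct functions on $\{0,1\}^4$. This is what legitimises reading off the coefficient conditions term by term; it holds because the $2^4$ square-free monomials induce a basis of the $2^4$-dimensional space of all functions $K^{4\times 1}\to K$ (equivalently, the vanishing ideal of $K^{4\times 1}$ is $(X_0^2-X_0,\dots,X_3^2-X_3)$). Everything else is bookkeeping of the twenty cubic monomials, and the resulting $2^{\binom{4}{2}}=64$ free choices of the symmetric coefficients $(b_{ij})_{i<j}$ recover the number announced in the introduction.
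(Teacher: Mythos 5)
Your argument is correct, and its opening reduction is exactly the paper's: since $K=\{0,1\}$, each point of $\PP_3(K)$ is spanned by a unique nonzero vector, and values of cubic forms lie in $\{0,1\}$, so $\cV\big(p(\bX)\big)=\cV\big(f(\bX)\big)$ holds precisely when $h(\bX):=f(\bX)-p(\bX)$ induces the zero polynomial function on $K^{4\times 1}$. Where you genuinely diverge is in the key classification step. The paper at this point simply invokes a theorem of \textsc{G.~Tallini} \cite[formula (1)]{tallini-61a}, which describes all cubic forms vanishing at every point of a Galois space, and is done in two lines. You instead prove the needed special case from scratch via the algebraic normal form: reduce each of the twenty cubic monomials modulo the functional identities $X_i^2=X_i$, and use the fact that the $2^4$ square-free monomials induce a basis of the $2^4$-dimensional space of all functions $K^{4\times 1}\to K$ to read off the coefficient conditions $a_i=0$, $c_{ijk}=0$, and $b_{ij}=b_{ji}$, which yield exactly the forms $\sum_{i<j}b_{ij}(X_i^2X_j+X_iX_j^2)$ and settle both implications at once. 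You correctly isolate the one point needing care, namely the linear independence of the square-free monomials as functions; this is a standard fact (equivalently, the vanishing ideal of $K^{4\times 1}$ is generated by the $X_i^2-X_i$), and if you want it fully self-contained you can note that evaluating a multilinear polynomial at the characteristic vectors of the subsets of $\{0,1,2,3\}$, ordered by inclusion, gives a triangular system. The trade-off is clear: the paper's citation buys brevity and places the result within Tallini's general theory over arbitrary $S_{r,q}$, while your route buys an elementary, self-contained proof that in addition makes the count of $2^{\binom{4}{2}}=64$ solutions transparent, matching the number announced in the introduction.
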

\begin{proof}
Because of $K=\{0,1\}$, $\cV\big(f(\bX)\big)=\cV\big(p(\bX)\big)$ holds
precisely when the cubic form $f(\bX)-p(\bX)\in K[\bX]$ yields the zero
function on $K^{4\times 1}$. By a result of \textsc{G.~Tallini} \cite[formula
(1)]{tallini-61a}, a cubic form in $K[\bX]$ has that property if, and only if,
it is given as in (\ref{eq:nullfkt}).
\end{proof}
By the above, we obtain $64$ cubic forms $p(\bX)$ for $|K| =2$, and we refrain
from a further discussion.

\par

If $|K|\leq 3$ then each of the polynomials $f(\bX)$ and $f_{|K|}(\bX)$ yields
the same simple (double) points and the same \emph{set\/} of tangent planes for
$F$. This is in accordance with the characterization of tangent planes in
Proposition \ref{prop:tg-ebenen}. However, for each simple point the two
polynomials yield \emph{distinct\/} tangent planes.

\par

If $|K| \geq 4$ then, by following ideas from the proof of (II), it is easy to
recover the unique point of tangency of a plane $\tau$ containing a generator
$g(1,s)$, $s\in K$, but not the point $Q_3$: We know $\tau\cap F=g(1,s)\cup k$,
where $k$ is a parabola. This $k$ is uniquely determined by $F$, because
$g(1,s)\cap\omega$ is its only point at infinity, and because there are at
least three points of $k$ outside $g(1,s)$. Thus $k$ meets $g(1,s)$ residually
at the unique point of $F$ with tangent plane $\tau$.
\end{section}

\begin{section}{Isometries of the Cayley surface $F$} \label{se:distance-function}

We shall assume $|K|\geq 4$ throughout this section.

\par

Two (possibly identical) points of $F \setminus g_{\infty}$ are said to be
\emph{parallel\/} if they are on a common generator of $F$. This parallelism is
an equivalence relation; it will be denoted by $\parallel$.


\par

Let $A=P(u_1,u_2)$ and $B=P(v_1,v_2)$, where $u_1,u_2,v_1,v_2\in K$, be
non-parallel points of $F\setminus g_\infty$. Thus $u_1\neq v_1$. The points of
intersection of the line $AB$ and $F$ are in one-one correspondence with the
zeros in $K$ of the polynomial
\begin{eqnarray*}
 \lefteqn{f\big((1-T)(1, u_1, u_2, u_1u_2-u_1^3)+T(1,v_1, v_2,
 v_1v_2-v_1^3)\big)}&& \\
 & = & T(T-1)(u_1-v_1)\big((u_1-v_1)^2T-2u_1^2+u_2+u_1v_1-v_2+v_1^2\big) \in K[T],
\end{eqnarray*}
taking into account multiplicities. We read off that those zeros are $0,1,$ and
\begin{equation}\label{distanz-P-Q-eq}
  \delta(A,B) := \frac{2u_1^2-u_2-u_1v_1+v_2-v_1^2}{(u_1-v_1)^2} \ .
\end{equation}
So $AB\cap F=\{A,B,C\}$ where, in terms of a cross ratio ($\DV$), the point $C$
is given by
\begin{equation}\label{distanz-DV-eq}
    \DV(C,B,A,I) = \delta(A,B) \ \mbox{ with } \{I\}:= AB \cap \omega.
\end{equation}
If $AB\cap F=\{A,B\}$, i.e. when $\delta(A,B)\in\{0,1\}$, our definition of $C$
is based upon the intersection multiplicity of $AB$ at $A$ and $B$. This can be
avoided as follows: By the last remark of the previous section, it is possible
to decide in a purely geometric way whether $AB$ lies in the tangent plane of
$F$ at $A$, whence $C=A$, or at $B$, whence $C=B$. (For this reason we adopted
the assumption $|K|\geq 4$.)

\par

Moreover, we define $\delta(A,B)=\infty$ whenever $A\parallel B$. So we are in
a position to regard $\delta$ as a \emph{distance function\/}
\begin{equation*}\label{}
    \delta: (F \setminus g_{\infty}) \times (F \setminus g_{\infty})
    \rightarrow K \cup \{ \infty \}.
\end{equation*}
It turns the affine part of the Cayley surface into a \emph{distance space\/}
in the sense of \textsc{W.~Benz} \cite[p.~33]{benz-92}. We obtain
\begin{equation}\label{eq:dist-formeln}
  \delta(A,A)= \infty \mbox{ and } \delta(A,B)=1- \delta(B,A)
  \mbox{ for all }A, B \in F \setminus g_{\infty},
\end{equation}
provided that we set $1- \infty:= \infty$. This distance function can be found
in a paper by \textsc{H.~Brauner} \cite[p.~115]{brau-64} for $K = \RR$ in a
slightly different form. In terms of our $\delta$, Brauner's distance function
can be expressed as
\begin{equation*}\label{}
    \widehat\delta(A,B) :=
    \frac{3}{2}\left(\frac{1}{2}-\delta(A,B)\right)^{-1};
\end{equation*}
the easy verification is left to the reader. However, the approach in
\cite{brau-64} is completely different, using differential geometry and Lie
groups. A major advantage of $\widehat\delta$ is that instead of
(\ref{eq:dist-formeln}) one obtains the much more suggestive formulas
$\widehat\delta(A,A)=0$ and $\widehat\delta(A,B)=-\widehat\delta(B,A)$. Since
we do not want to impose any restriction on the characteristic of the ground
field, it is impossible for us to make use of that function $\widehat\delta$.

\par

Given a point $A \in F \setminus g_{\infty}$ and an element $\rho \in K \cup
\{\infty\}$ we define the \emph{circle\/} with midpoint $A$ and radius $\rho$
in the obvious way as
\begin{equation*}\label{}
   \kreis(A,\rho):=\{Y \in F \setminus g_\infty \mid  \delta(A,Y)= \rho \}.
\end{equation*}
By the \emph{extended circle} $\cE(A,\rho)$ we mean the circle $\kreis(A,\rho)$
together with its midpoint $A$.

\par

If $\rho = \infty$ then $\kreis(A,\rho)=\cE(A,\rho)$ is the generator of $F$
through $A$, but without its point at infinity. In order to describe the
remaining circles, let us introduce, for $\alpha, \beta, \gamma \in K$, the
rationally parameterized curve
\begin{equation}\label{eq:alpha-beta-gamma}
 \cR_{\alpha, \beta, \gamma} :=
 \big\{ K (1, t, \alpha +\beta t+ (\gamma+1)t^2,\alpha t+ \beta t^2 + \gamma t^3)^\T
           \mid t\in K\cup\{\infty\} \big\},
\end{equation}
lying on $F$. It is a parabola for $\gamma=0$, a planar cubic for $\gamma=-1$,
and a twisted cubic parabola (i.e.\ a twisted cubic having the plane at
infinity as an osculating plane) otherwise.

\begin{lemma}\label{lem:3-punkte}
Let $P(u_{1i},u_{2i})$, $u_{ji} \in K$ with $i\in\{1,2,3\}$, be three mutually
non-parallel points of $F\setminus g_\infty$. Then there is a unique triad
$(\alpha,\beta,\gamma)\in K^3$ such that the curve $\cR_{\alpha,\beta,\gamma}$
contains the three given points.
\end{lemma}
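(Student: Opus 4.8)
The plan is to reduce the three incidence conditions to a single linear system and then to invoke the non-vanishing of a Vandermonde determinant. First I would determine precisely when an affine point $P(u_1,u_2)$ lies on the curve $\cR_{\alpha,\beta,\gamma}$. Comparing the parametrization (\ref{eq:P(u,v)}) of $F$ with (\ref{eq:alpha-beta-gamma}), any admissible finite parameter must satisfy $t=u_1$ (this is forced by the first two coordinates), and then equality in the third coordinate amounts to the scalar equation
\[
  u_2 = \alpha + \beta u_1 + (\gamma+1)u_1^2 .
\]
I would then observe that the remaining, fourth-coordinate requirement $u_1u_2-u_1^3 = \alpha u_1 + \beta u_1^2 + \gamma u_1^3$ is an automatic consequence of the displayed equation: multiplying the latter by $u_1$ and subtracting $u_1^3$ reproduces exactly the desired right-hand side. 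Hence $P(u_1,u_2)\in\cR_{\alpha,\beta,\gamma}$ holds if, and only if, that single equation holds.

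Next I would translate the hypothesis on the three points. As recalled at the beginning of this section, two affine points of $F$ are parallel exactly when they lie on a common generator, and the discussion preceding (\ref{distanz-P-Q-eq}) records that non-parallel points $P(u_1,u_2)$, $P(v_1,v_2)$ satisfy $u_1\neq v_1$. Thus the mutual non-parallelism of $P(u_{11},u_{21})$, $P(u_{12},u_{22})$, $P(u_{13},u_{23})$ says precisely that $u_{11},u_{12},u_{13}$ are pairwise distinct.

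With these two reductions the lemma becomes a quadratic interpolation problem. Setting $\gamma':=\gamma+1$ and applying the membership equation to each point in turn yields the system
\[
  u_{2i} = \alpha + \beta u_{1i} + \gamma' u_{1i}^2 , \qquad i\in\{1,2,3\},
\]
which is linear in the unknowns $\alpha,\beta,\gamma'$. Its coefficient matrix has rows $(1,u_{1i},u_{1i}^2)$, so its determinant is the Vandermonde product $\prod_{1\leq i<j\leq 3}(u_{1j}-u_{1i})$. By the preceding paragraph this product is non-zero, whence the system has a unique solution $(\alpha,\beta,\gamma')$, and therefore there is exactly one triad $(\alpha,\beta,\gamma)=(\alpha,\beta,\gamma'-1)$ with the required property.

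The only genuinely substantive point, and the step I would set up with care, is the equivalence of membership with the single scalar equation above: a point of $F$ a priori constrains $\cR_{\alpha,\beta,\gamma}$ through both its third and its fourth coordinate, and the argument hinges on the fact that the two parametrizations are arranged so that the fourth constraint is redundant given the third. Once that redundancy is in hand, the remainder is the classical uniqueness of interpolation of three values by a quadratic with distinct nodes, and no further case distinction (in particular none depending on $\Char K$) is needed.
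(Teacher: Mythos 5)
Your proof is correct and follows essentially the same route as the paper, which simply invokes Lagrange interpolation on the system $u_{2i}=\alpha+\beta u_{1i}+(\gamma+1)u_{1i}^2$ (your Vandermonde argument is the same fact). The only difference is that you explicitly verify what the paper leaves implicit --- that the fourth-coordinate condition $u_1u_2-u_1^3=\alpha u_1+\beta u_1^2+\gamma u_1^3$ is redundant and that non-parallelism forces the $u_{1i}$ to be pairwise distinct --- which is a sound and welcome bit of bookkeeping, not a deviation.
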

\begin{proof}
By Lagrange's interpolation formula, there is a unique triad
$(\alpha,\beta,\gamma)\in K^3$ such that $u_{2i}=\alpha + \beta u_{1i} +
(\gamma+1)u_{1i}^2$ for $i\in\{1,2,3\}$. Hence the assertion follows.
\end{proof}
We add in passing that $F\setminus g_\infty$ together with the affine traces of
the curves (\ref{eq:alpha-beta-gamma}) is isomorphic to the affine chain
geometry on the ring $K[\eps]$ of dual numbers over $K$. An isomorphism is
given by $P(u_1,u_2)\mapsto u_1+\eps u_2$. The interested reader should compare
with \cite[p.~796]{herz-95}.

\par

Next we describe circles and extended circles:

\begin{proposition}\label{circle-subset-cubic-thm}
Suppose that a point $A=P(a_1, a_2)$, $a_1,a_2\in K$, and $\rho \in K$ are
given. Let
\begin{equation}\label{kreis=kubik-eq}
  \alpha:= (\rho-2)a_1^2+a_2,\; \beta:=(1-2\rho)a_1, \;
  \gamma:= \rho.
\end{equation}
Then $(\alpha,\beta,\gamma)$ is the only triad in $K^3$ such that the curve
$\cR_{\alpha,\beta,\gamma}$ contains the circle $\kreis(A,\rho)$. Moreover, the
extended circle $\cE(A,\rho)$ equals the set of affine points of
$\cR_{\alpha,\beta,\gamma}$.
\end{proposition}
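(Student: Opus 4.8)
The plan is to reduce everything to the defining equation of the distance. First I would take an arbitrary non-parallel partner $Y=P(y_1,y_2)$ of $A$, so $y_1\neq a_1$, and impose $\delta(A,Y)=\rho$. Using formula (\ref{distanz-P-Q-eq}) with $(u_1,u_2)=(a_1,a_2)$ and $(v_1,v_2)=(y_1,y_2)$, this equation reads $2a_1^2-a_2-a_1y_1+y_2-y_1^2=\rho(a_1-y_1)^2$. Solving for $y_2$ and collecting powers of $y_1$ yields $y_2=(\rho-2)a_1^2+a_2+(1-2\rho)a_1y_1+(\rho+1)y_1^2$, that is, $y_2=\alpha+\beta y_1+(\gamma+1)y_1^2$ with exactly the triad $(\alpha,\beta,\gamma)$ from (\ref{kreis=kubik-eq}). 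Since the affine points of $\cR_{\alpha,\beta,\gamma}$ are precisely the points $P\big(t,\alpha+\beta t+(\gamma+1)t^2\big)$ with $t\in K$, this shows that $\kreis(A,\rho)$ consists of those affine points of $\cR_{\alpha,\beta,\gamma}$ for which $t\neq a_1$.

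Next I would account for the midpoint $A$ itself. Substituting $t=a_1$ into $\alpha+\beta a_1+(\gamma+1)a_1^2$ and using the identity $(\rho-2)+(1-2\rho)+(\rho+1)=0$, the $a_1^2$-terms cancel and the value collapses to $a_2$; hence $A=P(a_1,a_2)$ is the affine point of $\cR_{\alpha,\beta,\gamma}$ with parameter $t=a_1$. Combining the two observations, the affine points of $\cR_{\alpha,\beta,\gamma}$ are exactly $\{A\}\cup\kreis(A,\rho)=\cE(A,\rho)$, which is the second assertion of the proposition.

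For the uniqueness of the triad I would invoke Lemma \ref{lem:3-punkte}. The key supporting fact is that two affine points of $F$ are parallel if and only if they share the same first coordinate, since the generator through $P(u_1,u_2)$ is $g(1,u_1)$. Consequently distinct values of $y_1$ produce mutually non-parallel circle points, and as $y_1$ ranges over $K\setminus\{a_1\}$, which has $|K|-1\geq 3$ elements under the standing assumption $|K|\geq 4$, the circle $\kreis(A,\rho)$ contains at least three mutually non-parallel points. Any triad $(\alpha',\beta',\gamma')$ with $\cR_{\alpha',\beta',\gamma'}\supseteq\kreis(A,\rho)$ contains these three points, so Lemma \ref{lem:3-punkte} forces $(\alpha',\beta',\gamma')=(\alpha,\beta,\gamma)$.

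The computations here are entirely routine; the only point demanding care is the parallelism bookkeeping — confirming that non-parallel circle points correspond precisely to distinct first coordinates, so that the cardinality bound $|K|\geq 4$ genuinely delivers the three non-parallel points that Lemma \ref{lem:3-punkte} requires. I expect no serious obstacle beyond keeping this correspondence straight.
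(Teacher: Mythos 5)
Your proof is correct and follows essentially the same route as the paper's: both solve $\delta(A,Y)=\rho$ via (\ref{distanz-P-Q-eq}) for the second coordinate to identify the triad (\ref{kreis=kubik-eq}), and both derive uniqueness from the cardinality bound $|K|-1\geq 3$ together with Lemma \ref{lem:3-punkte}. The only differences are presentational: you make explicit the midpoint check at $t=a_1$ (which the paper compresses into ``a direct verification otherwise'') and the fact that distinct parameter values yield mutually non-parallel points, which the paper leaves implicit when invoking Lemma \ref{lem:3-punkte}.
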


\begin{proof}
Let $Y=P(u_1, u_2) \in F\setminus g_\infty$, where $u_1,u_2\in K$. Using
(\ref{distanz-P-Q-eq}) for $Y\notpar A$, and a direct verification otherwise,
shows that $Y\in\cE(A,\rho)$ if, and only if,
\begin{equation*}\label{}
   2a_1^2-a_2-a_1u_1+u_2-u_1^2 = \rho(a_1-u_1)^2
\end{equation*}
which in turn is equivalent to
\begin{equation*}
   u_2 = (\rho-2)a_1^2+a_2+(1-2\rho)a_1u_1 + (1+ \rho) u_1^2.
\end{equation*}
So $\cE(A,\rho)=\cR_{\alpha, \beta, \gamma}\setminus\omega$, with $\alpha,
\beta$, $\gamma$ as in (\ref{kreis=kubik-eq}). The uniqueness of $(\alpha,
\beta, \gamma)$ is immediate from $|\kreis(A,\rho)|=|\cR_{\alpha, \beta,
\gamma}|-2=|K|-1\geq 3$ and Lemma \ref{lem:3-punkte}.
\end{proof}

\begin{proposition}\label{prop:kreise}
Given a curve $\cR_{\alpha, \beta, \gamma}$, with $\alpha, \beta, \gamma \in
K$, there are three possibilities.
\begin{enumerate}
    \item $1-2\gamma \neq 0:$ $\cR_{\alpha, \beta, \gamma}\setminus\omega$
    coincides with the extended circle $\cE(A,\rho)$, where
       \begin{equation} \label{kreis=kubik-2-eq}
          A:=P\left(\frac{\beta}{1-2\gamma},\alpha-\frac{(\gamma-2)\beta^2}{(1-2\gamma)^2}\right)
                \mbox{ and }  \rho:= \gamma.
       \end{equation}

    \item $1-2\gamma = 0\neq \beta :$ $\cR_{\alpha, \beta,
    \gamma}\setminus\omega$ is not an extended circle.

    \item $1-2\gamma = 0=\beta:$ $\cR_{\alpha, \beta,
    \gamma}\setminus\omega$ is an extended circle $\cE(A,\frac{1}{2})$
    for all points $A \in \cR_{\alpha, \beta, \gamma} \setminus\omega$.

\end{enumerate}
\end{proposition}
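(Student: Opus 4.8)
The plan is to read Proposition \ref{circle-subset-cubic-thm} as defining a forward map $(A,\rho)\mapsto(\alpha,\beta,\gamma)$ via formula (\ref{kreis=kubik-eq}), and to prove the present proposition by inverting this map. Everything reduces to the question: for which triads $(\alpha,\beta,\gamma)$ does the system
\[
\alpha=(\rho-2)a_1^2+a_2,\qquad \beta=(1-2\rho)a_1,\qquad \gamma=\rho
\]
admit a solution $(a_1,a_2,\rho)$, and how many? Since $\rho=\gamma$ is forced, the whole trichotomy hinges on the single equation $\beta=(1-2\gamma)a_1$, whose solvability in $a_1$ splits exactly into the three announced cases.

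First I would dispatch the existence claims (a) and (c) by direct substitution into Proposition \ref{circle-subset-cubic-thm}. For (a), with $1-2\gamma\neq 0$, I set $\rho:=\gamma$, $a_1:=\beta/(1-2\gamma)$, and $a_2:=\alpha-(\gamma-2)\beta^2/(1-2\gamma)^2$, then check that feeding these into (\ref{kreis=kubik-eq}) reproduces the original $(\alpha,\beta,\gamma)$; Proposition \ref{circle-subset-cubic-thm} then yields $\cE(A,\rho)=\cR_{\alpha,\beta,\gamma}\setminus\omega$ immediately, and the solution of the system is visibly unique, so the midpoint is unique. For (c), with $1-2\gamma=0=\beta$ (hence $\gamma=\tfrac12$), I take \emph{any} affine point $A=P(a_1,a_2)$ of the curve, so that $a_2=\alpha+\tfrac32 a_1^2$ because $\beta=0$ and $\gamma+1=\tfrac32$. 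Substituting $(a_1,a_2,\tfrac12)$ into (\ref{kreis=kubik-eq}) should collapse to $(\alpha,0,\tfrac12)$, showing the curve is $\cE(A,\tfrac12)$; as $A$ was arbitrary on the curve, this gives the asserted ``every point is a midpoint''.

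For the non-existence claim (b) I would argue by contradiction, and this is the step I expect to be the crux. If $\cR_{\alpha,\beta,\gamma}\setminus\omega=\cE(A,\rho)$ for some $A=P(a_1,a_2)$ and $\rho\in K$, then by Proposition \ref{circle-subset-cubic-thm} this same set equals $\cR_{\alpha',\beta',\gamma'}\setminus\omega$ with $(\alpha',\beta',\gamma')$ given by (\ref{kreis=kubik-eq}). The point is that a curve determines its triad uniquely: the affine points of $\cR_{\alpha,\beta,\gamma}$ all have distinct first coordinates $u_1=t$, so any three of them are mutually non-parallel, and there are at least three since $|K|\geq 4$; Lemma \ref{lem:3-punkte} then forces $(\alpha',\beta',\gamma')=(\alpha,\beta,\gamma)$. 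Hence $\beta=\beta'=(1-2\gamma)a_1=0$, contradicting $\beta\neq 0$.

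Finally I would record the characteristic phenomenon behind the split: $1-2\gamma=0$ can occur only when $\Char K\neq 2$ (forcing $\gamma=\tfrac12$), so in characteristic two every curve falls under (a) with a unique midpoint, which is precisely the absence of circles with several midpoints announced in the introduction. The only genuinely non-routine ingredient is the rigidity invoked in (b); the remainder is substitution into Proposition \ref{circle-subset-cubic-thm} together with the elementary case distinction on $1-2\gamma$.
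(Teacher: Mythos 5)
Your proposal is correct and takes essentially the same route as the paper: case (a) by inverting formula (\ref{kreis=kubik-eq}), and cases (b) and (c) via the rigidity of the triad $(\alpha,\beta,\gamma)$, which the paper invokes through the uniqueness clause of Proposition \ref{circle-subset-cubic-thm} (itself resting on Lemma \ref{lem:3-punkte}) and which you re-derive directly from Lemma \ref{lem:3-punkte} using the distinct first coordinates of the affine curve points. The only cosmetic difference is that you settle case (c) by direct substitution into (\ref{kreis=kubik-eq}) instead of repeating the contradiction scheme of (b), which changes nothing of substance.
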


\begin{proof}
We distinguish three cases according to the above:

\par
(a) We infer from (\ref{kreis=kubik-eq}) that $\cR_{\alpha, \beta,
\gamma}\setminus\omega = \cE(A,\rho)$, with $\rho$ and $A$ as in
(\ref{kreis=kubik-2-eq}).

\par
(b) Assume to the contrary that $\cR_{\alpha, \beta, \gamma}\setminus\omega =
\cE(A,\rho)$. Now $1-2\gamma=0$ yields $2\gamma \ne 0$, so that $\Char K \neq
2$ and $\gamma = \frac{1}{2}$. Applying Theorem \ref{circle-subset-cubic-thm}
to $\kreis(A,\rho)$ yields $\rho=\gamma=\frac{1}{2}$ and, consequently,
$\beta=0$, an absurdity.

\par
(c) We proceed as in (b) thus obtaining $\Char K \ne 2$,
$\rho=\gamma=\frac{1}{2}$, and $a_2=\alpha+\frac32 a_1^2$, where $a_1\in K$ can
be chosen arbitrarily. This means that every point $A=P(a_1,a_2)$ of the given
curve $\cR_{\alpha,\beta,\gamma}$ can be considered as midpoint.
\end{proof}
As an application of the distance function $\delta$, we investigate various
actions of the group $G(F)$ arising from its action on the projective space
$\PP_3(K)$. Given a point $P \in F \setminus g_{\infty}$ and a line $g \subset
F$ with $P \not\in g \ne g_{\infty}$ the pair $(P,g)$ will be called an
\emph{antiflag\/} of $F\setminus g_\infty$. Following \cite[p.~33]{benz-92} an
\emph{isometry\/} of $F\setminus g_\infty$ is just a mapping $\mu:F\setminus
g_\infty\to F\setminus g_\infty$ such that
$\delta(A,B)=\delta\big(\mu(A),\mu(B)\big)$ for all $A,B\in F\setminus
g_\infty$.

\begin{theorem}\label{thm:trans}
The matrix group $G(F)$ has the following properties:
\begin{enumerate}
\item\label{thm:trans.isom}

$G(F)$ acts on $F\setminus g_\infty$ as a group of isometries.

\item\label{thm:trans.antiflagge}

$G(F)$ acts regularly on the set of antiflags of $F\setminus g_\infty$.

\item \label{thm:trans.nichtpar}

For each $d\in K$ the group $G(F)$ acts regularly on the set
 \begin{equation*}\label{}
   \Delta_d:=\{(A,B)\in(F\setminus g_\infty)^2\mid  \delta(A,B)=d\}.
 \end{equation*}

\item \label{thm:trans.par}

Let $A\parallel B$ and $A'\parallel B'$ be points of $F \setminus g_{\infty}$.
Write $A=P(u_1, u_2)$, $B=P(u_1, v_2)$, $A'=P(u_1',u_2')$, and $B'=P(u_1',
v_2')$ with $u_1,u_2,\ldots,v_2'\in K$. Then the number of matrices in $G(F)$
mapping $(A,B)$ to $(A',B')$ equals the number of distinct elements $c\in
K^\times$ such that
\begin{equation}\label{kollanzahl-parallele-paare-eq}
   c^2({v_2-u_2})=({v_2'-u_2'}).
\end{equation}
\end{enumerate}
\end{theorem}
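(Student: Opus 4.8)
The plan is to reduce all four parts to a single computation: the explicit action of $M_{a,b,c}$ on the affine parameters. Multiplying the column vector $(1,u_1,u_2,u_1u_2-u_1^3)^{\T}$ by $M_{a,b,c}$ and reading off the first two coordinates, one finds that $M_{a,b,c}$ sends $P(u_1,u_2)$ to $P\big(a+cu_1,\; b+3ac\,u_1+c^2u_2\big)$. I would record this formula first, together with two structural facts already available from Section~\ref{se:prelim}: the affine point $P(u_1,u_2)$ lies on the generator $g(1,u_1)$, and consequently $M_{a,b,c}$ maps $g(1,s)$ to $g(1,a+cs)$.

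For \emph{(\ref{thm:trans.isom})} I would substitute $u_1'=a+cu_1$, $v_1'=a+cv_1$ (and the corresponding second parameters) into the defining expression~(\ref{distanz-P-Q-eq}) for $\delta$. The denominator $(u_1'-v_1')^2=c^2(u_1-v_1)^2$ contributes a factor $c^2$. Expanding the numerator $2(u_1')^2-u_2'-u_1'v_1'+v_2'-(v_1')^2$, I expect every term containing $a$ or $b$ to cancel, leaving exactly $c^2$ times the original numerator, so that $\delta(MA,MB)=\delta(A,B)$ for non-parallel pairs; the parallel case then follows since $M$ permutes generators and hence preserves the condition $\delta=\infty$. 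This cancellation is the computational crux of the whole theorem and the genuine reason $G(F)$ consists of isometries; it is routine but the longest step.

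Part \emph{(\ref{thm:trans.antiflagge})} I would handle by counting and exhibiting explicit solutions. Antiflags $\big(P(u_1,u_2),g(1,s)\big)$ are exactly the triples $(u_1,u_2,s)$ with $s\neq u_1$, so there are $|K|^2(|K|-1)=|G(F)|$ of them. Fixing the reference antiflag $\big(Q_0,g(1,1)\big)$, the requirements $a=u_1'$, $b=u_2'$, $a+c=s'$ force $(a,b,c)=(u_1',u_2',s'-u_1')$, and $c=s'-u_1'\neq 0$ exactly because $s'\neq u_1'$. Thus there is a unique element of $G(F)$ carrying the reference to any prescribed antiflag, which is precisely regularity.

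Parts \emph{(\ref{thm:trans.nichtpar})} and \emph{(\ref{thm:trans.par})} are then the payoff. For \emph{(\ref{thm:trans.nichtpar})} I would introduce the map $\Phi\colon\Delta_d\to\{\text{antiflags}\}$, $(A,B)\mapsto (A,g_B)$, where $g_B$ is the generator through $B$; formula~(\ref{distanz-P-Q-eq}) shows that for a fixed antiflag $\big(A,g(1,w)\big)$ there is exactly one affine point $B$ on that generator with $\delta(A,B)=d$, so $\Phi$ is a bijection. It is $G(F)$-equivariant by part~\emph{(\ref{thm:trans.isom})} and the permutation of generators, so regularity on $\Delta_d$ transports from part~\emph{(\ref{thm:trans.antiflagge})}. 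For \emph{(\ref{thm:trans.par})} I would impose $M_{a,b,c}(A)=A'$ and $M_{a,b,c}(B)=B'$: since $A$ and $B$ share the first parameter $u_1$, both yield the single equation $a+cu_1=u_1'$, while subtracting the two second-coordinate equations gives precisely $c^2(v_2-u_2)=v_2'-u_2'$, i.e.\ (\ref{kollanzahl-parallele-paare-eq}). For each $c\in K^\times$ satisfying this relation, $a$ and $b$ are uniquely determined, giving exactly one matrix per admissible $c$, as claimed; the only point worth flagging is the degenerate case $A=B$, where the relation reads $0=v_2'-u_2'$ and the formula correctly returns all of $K^\times$.
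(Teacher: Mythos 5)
Your proposal is correct, and it takes a genuinely different route from the paper in three of the four parts. Your organizing idea is to derive once the explicit parameter action $M_{a,b,c}\colon P(u_1,u_2)\mapsto P(a+cu_1,\;b+3ac\,u_1+c^2u_2)$ (which is indeed correct: the fourth coordinate of $M_{a,b,c}\cdot(1,u_1,u_2,u_1u_2-u_1^3)^\T$ works out to $u_1'u_2'-u_1'^3$ for these $u_1',u_2'$) and to let this single formula drive everything. For part (\ref{thm:trans.isom}) the paper argues synthetically: $\delta$ is a cross ratio by (\ref{distanz-DV-eq}), hence invariant under projective collineations, with a separate tangent-plane argument for the degenerate values $\delta\in\{0,1\}$ and a generator argument for $\delta=\infty$. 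Your direct substitution into (\ref{distanz-P-Q-eq}) is legitimate because that formula is the \emph{definition} of $\delta$ for all non-parallel pairs, and the claimed cancellation does occur (the numerator collapses to $c^2\bigl(2u_1^2-u_2-u_1v_1+v_2-v_1^2\bigr)$, all terms in $a,b$ cancelling, while the denominator picks up $c^2$); this uniform computation eliminates the paper's case split for $\delta\in\{0,1\}$, at the cost of the conceptual explanation via projective invariants. Part (\ref{thm:trans.antiflagge}) is essentially the paper's argument (normalize to a reference antiflag and solve uniquely for $(a,b,c)$), your cardinality count $|K|^2(|K|-1)=|G(F)|$ being a redundant but harmless cross-check. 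For part (\ref{thm:trans.nichtpar}) the paper normalizes $A=A'=P(0,0)$, exhibits $M_{0,0,c}$ explicitly, and derives uniqueness by passing to antiflags; your equivariant bijection $\Phi\colon\Delta_d\to\{\text{antiflags}\}$ packages the same key observation — each antiflag carries exactly one point at distance $d$, by linearity of (\ref{distanz-P-Q-eq}) in the second parameter — into a $G(F)$-set isomorphism that transports regularity from (\ref{thm:trans.antiflagge}) in one stroke, which is arguably cleaner. For part (\ref{thm:trans.par}) the paper first moves both pairs to the form $\bigl(P(0,0),P(0,\cdot)\bigr)$ by explicit matrices and then invokes (\ref{eq:stabil4}); your direct solution of the coordinate equations obtains the same count without normalization and without appealing to Theorem \ref{thm:stabil} at all — a mild gain, since only the stabilizer of $Q_0$ \emph{inside} $G(F)$ is needed there, which your parameter formula gives immediately ($a=b=0$), rather than the full projective stabilizer. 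Your handling of the degenerate case $A=B$ is also consistent: if $v_2'\neq u_2'$ the relation $c^2\cdot 0=v_2'-u_2'$ has no solutions and indeed no matrix works, while if $v_2'=u_2'$ all of $K^\times$ is returned, matching the count of matrices.
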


\begin{proof} (a) Let $A,B\in F\setminus g_\infty$. Suppose that $M\in G(F)$ takes $A$ to
$A'$ and $B$ to $B'$. If $\delta(A,B) \neq 0,1,\infty$ then the line $AB$ meets
the Cayley surface at three distinct points $A, B$, and $C$, say. Since $M$
preserves cross ratios, $\delta(A',B')= \delta(A,B)$ is immediate from
(\ref{distanz-DV-eq}). If $\delta=0$ then $AB$ is a tangent of $F$ at $A$. By
the remark below (\ref{distanz-DV-eq}), this tangent is mapped to a tangent of
$F$ at $A'$, whence $\delta(A',B')= 0$, as required. The case $\delta(A,B)=1$
can be treated similarly. Finally, $\delta(A,B)=\infty$ means that $A,B$ are on
a common generator, a property which is shared by their images, whence the
assertion follows.

\par

(b) Since $G(F)$ acts transitively on $F \setminus g_{\infty}$, it is
sufficient to show that the stabilizer of $Q_0$ in $G(F)$, i.e.\ $\{ M_{0,0,c}
\mid c \in K^{\times} \}$, acts regularly on $\{ g(1,c) \mid c \in
K^{\times}\}$. In fact, if we are given generators $g(1,c_1)$ and $g(1,c_2)$
with $c_1, c_2 \in K^{\times}$ then $M_{0,0,c_2c_1^{-1}}$ is the only solution.

\par
(c) Let $(A,B)$ and $(A',B')$ be elements of $\Delta_d$. By
Lemma~\ref{transitiviteats-thm}, we may assume w.l.o.g.\ that $A=A'=P(0,0)$ .
We infer from (\ref{distanz-P-Q-eq}) that a point $Y=P(y_1,y_2)$, $y_1,y_2\in
K$, satisfies $\delta(A,Y)=d$ if, and only if, $y_2 = (d+1)y_1^2$ and $y_1\in
K^\times$. So there exist elements $u_1, u_1'\in K^{\times}$ with
\begin{equation*}\label{}
    B=P(u_1,(d+1)u_1^2), \; B'=P(u_1',(d+1)u_1'^{2}).
\end{equation*}
Letting $c:= u_1'u_1^{-1}$, the matrix $M_{0,0,c}$ has the required property.
The point $A$ and the unique generator through $B$ form an antiflag; the same
holds for $A'$ and the unique generator through $B'$. So the asserted
regularity is a consequence of (\ref{thm:trans.antiflagge}).

\par

(d) The matrix $M_{-u_1,0,1}M_{0,3u_1^2-u_2,1}\in G(F)$ maps $(A,B)$ to
$\big(P(0,0),P(0,v_2-u_2)\big)$. Similarly, we can take $(A',B')$ to
$\big(P(0,0),P(0,v_2'-u_2')\big)$ by a matrix in $G(F)$. By (\ref{eq:stabil4}),
a matrix in $G(F)$ stabilizes $P(0,0)=Q_0$ if, and only if, it has the form
$M_{0,0,c}$, where $c\in K^{\times}$. As (\ref{kollanzahl-parallele-paare-eq})
is a necessary and sufficient condition for such a matrix to take
$P(0,v_2-u_2)$ to $P(0,v_2'-u_2')$, the assertion follows.
\end{proof}
The previous result shows that the action of $G(F)$ on pairs of parallel points
depends on the square classes of $K^\times$. If $K^\times$ has a single square
class (e.g.\ when $K$ is quadratically closed or when $K$ is a finite field of
even order) then all pairs of distinct parallel points are in one orbit of
$G(F)$. If $K^\times$ has precisely two square classes and if $-1$ is not a
square (e.g.\ when $K=\RR$ or when $K$ is a finite field with $|K|\equiv 3
\pmod 4)$ then all $2$-subsets of parallel points are in one orbit of $G(F)$.

\par

We are now in a position to describe all isometries of $F\setminus g_\infty$.
Recall that we do not assume an isometry to be a bijection.

\begin{theorem}
Each isometry $\mu:F \setminus g_{\infty}\to F \setminus g_{\infty}$ is induced
by a unique matrix in $G(F)$. Consequently, $\mu$ is bijective and it can be
extended in a unique way to a projective collineation of\/ $\PP_3(K)$ fixing
the Cayley surface $F$.
\end{theorem}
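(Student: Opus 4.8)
The plan is to fix an arbitrary isometry $\mu$ and first normalize it by composing with a known group element so that it fixes a distinguished point, then exploit the rigidity forced by the distance function to pin down $\mu$ completely. First I would use the regularity statement from Theorem~\ref{thm:trans}(\ref{thm:trans.nichtpar}) together with the transitivity from Lemma~\ref{transitiviteats-thm}: since $G(F)$ acts transitively on $F\setminus g_\infty$, I may pre-compose $\mu$ with a suitable $M\in G(F)$ so that the new isometry $\mu'$ fixes the base point $Q_0=P(0,0)$. Because $G(F)$ consists of isometries by part~(\ref{thm:trans.isom}), $\mu'$ is again an isometry. The goal then reduces to showing $\mu'$ is induced by a matrix in $G(F)$ fixing $Q_0$, i.e.\ by some $M_{0,0,c}$.

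Next I would reconstruct the action of $\mu'$ on generators from the distance data. Fix any point $B$ with $\delta(Q_0,B)=d$ for some $d\in K\setminus\{0,1,\infty\}$; by the isometry property $\delta(Q_0,\mu'(B))=d$ as well. Using the explicit description from the proof of part~(\ref{thm:trans.nichtpar})---namely that $\delta(Q_0,Y)=d$ forces $Y=P(y_1,(d+1)y_1^2)$ with $y_1\in K^\times$---the point $\mu'(B)$ must lie on the same curve. The idea is that the single scalar $c$ (recording how $\mu'$ rescales the $y_1$-coordinate of one such $B$) already determines a candidate matrix $M_{0,0,c}\in G(F)$; I would then let $\nu:=M_{0,0,c}^{-1}\circ\mu'$, an isometry fixing both $Q_0$ and that particular $B$, and aim to prove $\nu=\id$.

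The heart of the argument is showing that an isometry fixing enough points must be the identity. Here I would invoke the regularity of parts~(\ref{thm:trans.antiflagge}) and~(\ref{thm:trans.nichtpar}): the distance function, via formula~(\ref{distanz-DV-eq}), determines each non-parallel point $C$ on a line $AB$ as the unique third intersection with $F$, and the parallel-point case is governed by the generators. Concretely, for any point $Y\notpar Q_0$, the value $\delta(Q_0,Y)$ together with $\delta(B,Y)$ (both preserved by $\nu$) locates $Y$ among the points $P(y_1,(d+1)y_1^2)$; fixing $Q_0$ and one point $B$ off its generator forces, through the two independent distance constraints, that $y_1$ and hence $Y$ is fixed. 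For points parallel to $Q_0$, i.e.\ on the generator $g(1,0)$, I would use that $\delta(Y,Q_0)=\infty$ is shared and that distances to the already-fixed non-parallel points determine position on the generator. Thus $\nu$ fixes $F\setminus g_\infty$ pointwise, so $\nu=\id$ and $\mu'=M_{0,0,c}$, giving $\mu=M\circ M_{0,0,c}^{-1}$ up to the normalization, an element of $G(F)$.

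The main obstacle I anticipate is the degenerate distance values $\delta\in\{0,1\}$, where the line $AB$ meets $F$ only in $\{A,B\}$ and the third point $C$ coincides with $A$ or $B$. The excerpt's remark following~(\ref{distanz-DV-eq}) handles this by the purely geometric tangent-plane criterion (valid since $|K|\geq 4$), so I would lean on that to ensure the reconstruction of $C$, and hence the rigidity argument, goes through uniformly. Uniqueness of the inducing matrix follows from the earlier observation that each automorphic projective collineation of $F$ is induced by precisely one matrix in $G_{\mathrm{ext}}(F)$, combined with $G(F)=G_{\mathrm{ext}}(F)$ for $|K|\geq 4$ from~(\ref{eq:stabil4}); the extension to a collineation of $\PP_3(K)$ and its uniqueness then come immediately from the fact that the matrix acts on the whole ambient space and fixes $F$.
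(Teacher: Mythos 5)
Your overall architecture---normalize by an element of $G(F)$ so that the isometry fixes two non-parallel points, show such an isometry is the identity, then get uniqueness of the matrix from the faithfulness of the action (equivalently $G(F)=G_\mathrm{ext}(F)$ for $|K|\geq 4$)---is exactly the paper's, and your normalization step (your $M_{0,0,c}$-correction is in substance the regularity of Theorem~\ref{thm:trans}(\ref{thm:trans.nichtpar})) is sound. But the heart of your rigidity argument has a genuine gap: it is \emph{false} that the two values $\delta(Q_0,Y)$ and $\delta(B,Y)$ determine $Y$. Writing $B=P(b_1,b_2)$ and imposing $\delta(Q_0,Y)=d_1$, i.e.\ $Y=P\big(y_1,(d_1+1)y_1^2\big)$ with $y_1\in K^\times$, the second condition $\delta(B,Y)=d_2$ becomes, by (\ref{distanz-P-Q-eq}),
\begin{equation*}
(d_1-d_2)\,y_1^2+(2d_2-1)\,b_1y_1+(2-d_2)\,b_1^2-b_2=0,
\end{equation*}
a genuine quadratic in $y_1$ whenever $d_1\neq d_2$: you are intersecting two circles of \emph{different} radii, and by Proposition~\ref{circle-subset-cubic-thm} these are curves $\cR_{\alpha,\beta,\gamma}$ with $\gamma=d_1\neq d_2=\gamma'$, which can meet in two affine points. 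Concretely, over $\QQ$ with $B=P(1,0)$, the distinct points $Y=P\big(2,\tfrac{24}{7}\big)$ and $Y'=P\big(3,\tfrac{54}{7}\big)$ satisfy $\delta(Q_0,Y)=\delta(Q_0,Y')=-\tfrac17$ and $\delta(B,Y)=\delta(B,Y')=-\tfrac47$; moreover, by (\ref{eq:dist-formeln}) the reversed distances $\delta(Y,Q_0)=1-\delta(Q_0,Y)$ etc.\ carry no extra information, so nothing in your data excludes an isometry fixing $Q_0$ and $B$ while moving $Y$.

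The paper closes precisely this gap in two stages. First it fixes pointwise the \emph{entire generators} through the two fixed points; your parallel-point observation, applied to both generators, accomplishes this, since for instance $\delta\big(B,P(0,u_2)\big)$ is an injective affine-linear function of $u_2$ (so it must come \emph{before}, not after, the treatment of non-parallel points). Then an arbitrary point $P(u_1,u_2)$ with $u_1\notin\{0,1\}$ is caught as the intersection of two circles of \emph{radius zero}, $\kreis\big(P(0,v_2),0\big)$ and $\kreis\big(P(1,w_2),0\big)$, whose midpoints $P(0,v_2)$, $P(1,w_2)$ lie on the two fixed generators and are uniquely determined by the tangency conditions. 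Because the two radii are equal ($d_1=d_2=0$ in the display above), the quadratic terms cancel, the intersection condition is linear in $x_1$, and the two circles meet in the single point $P(u_1,u_2)$. If you replace your ``two independent distance constraints'' by this radius-zero trick (which also uses the tangent-plane remark below (\ref{distanz-DV-eq}) that you correctly flagged for the degenerate values $\delta\in\{0,1\}$), the rest of your proposal goes through as written.
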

\begin{proof}
By Theorem \ref{thm:trans} (\ref{thm:trans.isom}) and
(\ref{thm:trans.nichtpar}), it is sufficient to verify the assertion for an
isometry $\mu$ fixing the points $P(0,0)$ and $P(1,0)$. Since $G(F)$ acts
faithfully on $F\setminus g_\infty$, the proof will then be accomplished by
showing $\mu= \id_{F \setminus g_{\infty}}$.

\par

For all $u_2 \in K$ we obtain $\delta\big(P(1,0), P(1,u_2)\big)= \infty$,
$\delta\big(P(0,0), P(1,u_2)\big)= u_2-1$, $\delta\big(P(0,0), P(0,u_2)\big)=
\infty$ and $\delta\big(P(1,0), P(0,u_2)\big)= u_2+2$. So, by the isometricity
of $\mu$, all affine points of the generators through $P(1,0)$ and $P(0,0)$
remain fixed under $\mu$.

\par

Next choose any point $P(u_1, u_2)$, where $u_1 \in K \setminus \{0,1 \}$ and
$u_2 \in K$. We determine all $(v_2,w_2)\in K^2$ subject to $\delta\big(P(0,
v_2), P(u_1, u_2)\big)=0$ and $\delta\big(P(1, w_2), P(u_1, u_2)\big)=0$. The
unique solution is $(v_2,w_2):=(u_2-u_1^2,-u_1^2-u_1+u_2+2)$. A point $P(x_1,
x_2)$, $x_1,x_2\in K$, belongs to the circle $\kreis\big(P(0, v_2),0\big)$ if,
and only if,
\begin{equation}\label{isometrisch1-eq}
x_1 \neq 0 \mbox{ and }\delta\big(P(0, v_2),P(x_1, x_2)\big) =
    \frac{-x_1^2+x_2+u_1^2-u_2}{x_1^2}=0.
\end{equation}
Similarly, $P(x_1,x_2)\in \kreis\big(P(1,w_2),0\big)$ if, and only if,
\begin{equation*}\label{isometrisch2-eq}
x_1 \neq 1 \mbox{ and }
    \delta\big(P(1, w_2),P(x_1, x_2)\big) =
    \frac{-x_1^2-x_1+x_2+u_1^2+u_1-u_2}{(x_1-1)^2}=0.
\end{equation*}
So, if $P(x_1, x_2)$ belongs to both circles it has to satisfy
\begin{equation*}
 0  = (-x_1^2+x_2+u_1^2-u_2)-(-x_1^2-x_1+x_2+u_1^2+u_1-u_2)
    =  -x_1+u_1 \ ,
\end{equation*}
whence $x_1=u_1$ and, by (\ref{isometrisch1-eq}), $x_2=u_2$. Clearly, under
$\mu$ the two circles remain fixed, so that $\mu\big(P(u_1, u_2)\big)=P(u_1,
u_2).$
\end{proof}
In the previous theorem we used that an isometry leaves invariant \emph{all}
distances. Thus we established that $\delta$ is a defining function (see
\cite[p.~23]{benz-94}) for the group of automorphic projective collineations of
$F$. It would be interesting to know if this assumption could be weakened, for
example, by requiring that only \emph{some\/} distances are being preserved.

\end{section}

\noindent
Johannes Gmainer and Hans Havlicek\\
Institut f{\"u}r Diskrete Mathematik und Geometrie\\
Technische Universit{\"a}t Wien\\
Wiedner Hauptstra{\ss}e 8--10/104\\
A-1040 Wien\\
Austria

\noindent\texttt{gmainer@geometrie.tuwien.ac.at,\\
havlicek@geometrie.tuwien.ac.at}
\end{document}